\theoremstyle{plain}
\newtheorem{thm}{Theorem}[section]
\newtheorem{cor}[thm]{Corollary}
\newtheorem{lem}[thm]{Lemma}
\newtheorem*{ex*}{Example}
\numberwithin{equation}{section}
\newcommand{\N}{\mathbb{N}}
\newcommand{\R}{\mathbb{R}_{+}^{d}}
\newcommand{\RR}{\mathbb{R}^{d}}
\def\a{\alpha}
\def\z{\zeta}
\def\t{\theta}
\def\vt{\vartheta}
\def\vr{\varrho}
\def\eps{\varepsilon}
\def\b{(\z,q_{\pm})}
\def\eee{(\z,q_{\pm}({x \vee x'},y,s))}
\def\1{\Pi_{\alpha+\mathbf{1}+\eps}(ds)}
\DeclareMathOperator{\e}{{\mathbb{E}\textrm{{xp}}}}
\DeclareMathOperator{\ee}{{\mathbb{E}\textrm{\emph{xp}}}}
\DeclareMathOperator{\lo}{{\mathbb{L}\textrm{og}}}
\DeclareMathOperator{\loo}{{\mathbb{L}\textrm{\emph{og}}}}
\DeclareMathOperator{\support}{supp}
\begin{document}

\subjclass[2000]{42C05 (primary), 42B20 (secondary)}
\keywords{Laguerre operator, Laguerre semigroup, maximal operator, Riesz transform, square function, 
		multiplier, Calder\'on-Zygmund operator, standard estimates}

\thanks{
The first-named author was partially supported by MNiSW Grant N N201 417839.
}

\title[Calder\'on-Zygmund operators related to Laguerre expansions]
{Calder\'on-Zygmund operators related to Laguerre function expansions of convolution type}

\author[A. Nowak]{Adam Nowak}
\address{Adam Nowak, \newline
			Instytut Matematyczny,
      Polska Akademia Nauk, \newline
      \'Sniadeckich 8,
      00--956 Warszawa, Poland \newline
			\indent and \newline
			Instytut Matematyki i Informatyki,
      Politechnika Wroc\l{}awska,       \newline
      Wyb{.} Wyspia\'nskiego 27,
      50--370 Wroc\l{}aw, Poland      
      }
\email{adam.nowak@pwr.wroc.pl}

\author[T. Szarek]{Tomasz Szarek}
\address{Tomasz Szarek,     \newline
      ul. W. Rutkiewicz 29\slash 43,
      PL-50--571 Wroc\l{}aw, Poland
      }
\email{szarektomaszz@gmail.com}

\begin{abstract}
We develop a technique of proving standard estimates in the setting of Laguerre function expansions
of convolution type, which works for all admissible type multi-indices $\alpha$ in this context.
This generalizes a simpler method existing in the literature, but being valid for a restricted range
of $\alpha$. As an application, we prove that several fundamental operators in harmonic analysis
of the Laguerre expansions, including maximal operators related to the heat and Poisson semigroups,
Riesz transforms, Littlewood-Paley-Stein type square functions and multipliers of Laplace and
Laplace-Stieltjes transforms type, are (vector-valued) Calder\'on-Zygmund operators in the sense 
of the associated space of homogeneous type.
\end{abstract}

\maketitle

\section{Introduction and preliminaries} \label{sec:intro}

Let $d \ge 1$ and $\alpha=(\alpha_1,\ldots,\alpha_d) \in (-1,\infty)^d$.
We shall work on the space $\R=(0,\infty)^d$ equipped with the measure 
$$
\mu_{\alpha}(dx) = x_1^{2\alpha_1+1}\cdot \ldots \cdot x_d^{2\alpha_d+1}\, dx
$$
and with the Euclidean norm $|\cdot|$. Since $\mu_{\alpha}$ satisfies the doubling condition,
the triple $(\R,d\mu_{\alpha},|\cdot|)$ forms the space of homogeneous type
in the sense of Coifman and Weiss \cite{CW}. The Laguerre operator
$$
L_{\alpha} = -\Delta + |x|^2 - \sum_{i=1}^d \frac{2\alpha_i+1}{x_i} \, \frac{\partial}{\partial x_i}
$$
is symmetric and positive in $L^2(d\mu_{\alpha})$, and it has a natural self-adjoint extension 
$\mathcal{L}_{\alpha}$ whose spectral decomposition is discrete and is given by the Laguerre functions
of convolution type $\ell_k^{\alpha}$, see \cite{NS1}. The associated heat semigroup
$\{\exp(-t\mathcal{L}_{\alpha})\}$ has an integral representation, and the Laguerre heat kernel is
known explicitly, see \cite[Section 2]{NS1}, to be
\begin{equation*}
G^\alpha_t(x,y)=(\sinh 2t)^{-d}\exp\Big({-\frac{1}{2} \coth(2t)\big(|x|^{2}+|y|^{2}\big)}\Big)
\prod^{d}_{i=1} (x_i y_i)^{-\alpha_i} I_{\alpha_i}\left(\frac{x_i y_i}{\sinh 2t}\right),
\end{equation*}
with $I_{\nu}$ denoting the modified Bessel function of the first kind and order $\nu$; as a function
on $\mathbb{R}_+$, $I_{\nu}$ is real, positive and smooth for any $\nu > -1$, cf. \cite{Wat}.

The main objective of this paper is to develop, for \emph{arbitrary} $\alpha \in (-1,\infty)^d$,
a technique of proving standard estimates, see \eqref{gr}-\eqref{grad} below,
for various kernels expressible via $G_t^{\alpha}(x,y)$.
Typical and important examples here are kernels associated with the Laguerre 
heat and Poisson maximal operators,
Riesz-Laguerre transforms, Littlewood-Paley-Stein type square functions and multipliers of Laplace
and Laplace-Stieltjes transforms type. The multiplier operators just mentioned 
cover as special cases imaginary powers of $\mathcal{L}_{\alpha}$ and the related fractional integrals.

For the restricted range of $\alpha \in [-1\slash 2,\infty)^d$, the problem was treated by
Nowak and Stempak \cite{NS1}. The idea standing behind the method presented in \cite{NS1} has roots
in Sasso's paper \cite{Sa} and it is based on Schl\"afli's Poisson type representation for 
the Bessel function (see \cite[Chapter VI, Section 6$\cdot$15]{Wat} and \cite[Section 5]{NS1})
\begin{equation} \label{Schlafli}
I_{\nu}(z) = z^{\nu} \int_{-1}^1 \exp({-z s})\, \Pi_{\nu}(ds), \qquad
    |\arg z| < \pi, \quad \nu \ge -\frac{1}{2},
\end{equation}
where the measure $\Pi_{\nu}$ is given by the density
$$
\Pi_{\nu}(ds) = \frac{(1-s^2)^{\nu-1\slash 2}ds}{\sqrt{\pi} 2^{\nu}\Gamma{(\nu+1\slash 2)}},
    \qquad \nu > -1\slash 2,
$$
and in the limit case $\Pi_{-1\slash 2}$ becomes the atomic measure defined as
the sum of unit point masses at $-1$ and $1$ divided by $\sqrt{2\pi}$.
Assuming that $\alpha \in [-1\slash 2,\infty)^d$,
Schl\"afli's formula allows to write the heat kernel in the following symmetric way:
\begin{equation} \label{Gzeta}
G^{\alpha}_t(x,y) = \Big( \frac{1-\zeta^2}{2\zeta}\Big)^{d + |\alpha|} \int_{[-1,1]^d}
    \exp\Big(-\frac{1}{4\zeta} q_{+}(x,y,s) - \frac{\zeta}{4} q_{-}(x,y,s)\Big) \, \Pi_{\alpha}(ds),
\end{equation}
where $|\a|=\a_1 + \ldots + \a_d$, $\Pi_{\alpha}$ stands for the product measure 
$\bigotimes_{i=1}^d \Pi_{\alpha_i}$,
$$
q_{\pm}(x,y,s) = |x|^2 + |y|^2 \pm 2 \sum_{i=1}^d x_i y_i s_i, \qquad x,y \in \R, \quad s \in [-1,1]^d,
$$
and $t$ is related to $\zeta$ by $\zeta = \tanh t$; equivalently,
\begin{equation} \label{tzeta}
t = t(\zeta) = \frac{1}{2} \log \frac{1+ \zeta}{1-\zeta}, \qquad \zeta \in (0,1).
\end{equation}
This representation of the heat kernel turned out to be particularly well suited for considerations
connected with applications of the Calder\'on-Zygmund theory. The essence and convenience of
the technique derived in \cite{NS1} lies in the fact that the integral against $\Pi_{\alpha}(ds)$
occurring in kernels defined via $G_t^{\alpha}(x,y)$ can be handled independently of the integrand. 
Then expressions
one has to estimate are relatively simple and contain no transcendental functions.
Unfortunately, the restriction $\alpha \in [-1\slash 2,\infty)^d$ resulting from Schl\"afli's
formula cannot be released in a straightforward manner.

To solve the problem and cover in a unified way 
all $\alpha \in (-1,\infty)^d$ we combine \eqref{Schlafli} with the recurrence relation
(cf. \cite[Chapter III, Section 3$\cdot$71]{Wat})
\begin{equation} \label{Irec}
I_{\nu}(z) = \frac{2(\nu+1)}{z} I_{\nu+1}(z) + I_{\nu+2}(z),
\end{equation}
as suggested vaguely in \cite[p.\,666]{NS1}. This leads to a representation of $G_t^{\alpha}(x,y)$
as a sum of $2^d$ components, all of them being similar to the expression in \eqref{Gzeta},
see Section \ref{sec:prep}. Then each component is analyzed by means of a suitable generalization 
of the strategy employed in \cite{NS1}. However, 
the technical side of the present paper is considerably more involved than that of
\cite{NS1} and also some essentially new arguments are required.

As an application of the presented technique, we prove that the maximal operators of the heat and
Poisson semigroups, Riesz-Laguerre transforms, Littlewood-Paley-Stein type square functions and
multipliers of Laplace and Laplace-Stieltjes transforms type are, or can be viewed as, 
Calder\'on-Zygmund operators
in the sense of the space $(\R,d\mu_{\alpha},|\cdot|)$, see Theorem \ref{thm:main}. 
This recovers and extends
to all $\alpha \in (-1,\infty)^d$ known results for $\alpha \in [-1\slash 2,\infty)^d$ obtained in
\cite{NS1} for the maximal operators and Riesz transforms, in \cite{Sz} for vertical and horizontal
$g$-functions of order one, and in \cite{Sz2} for Laplace type multipliers of both types.
Moreover, here we also deal with $g$-functions of arbitrary orders and mixed vertical and horizontal
components, which were not investigated earlier in the Laguerre context.
Noteworthy, our technique is well suited to a wider variety of operators, including more general
forms of the $g$-functions and Lusin's area type integrals.

It is remarkable that recently, in a similar spirit, analogous techniques have been developed
in the Bessel setting (the context of the Hankel transform)
by Betancor, Castro and Nowak \cite{BCN}, and in the more complex Jacobi
setting by Nowak and Sj\"ogren \cite{NoSj}. However, in \cite{BCN}, as well as in \cite{NoSj},
ranges of admissible type indices are restricted, as it took place in the Laguerre situation in
\cite{NS1,Sz,Sz2}. The results of this paper show how to remove the restriction in the Bessel setting,
and give an intuition how it could be done in the Jacobi situation. Furthermore, they also
suggest that various results obtained recently in the context of the Dunkl harmonic oscillator
and the associated group of reflections isomorphic to $\mathbb{Z}_2^d$, see \cite{NS3,NS2,Sz1,Sz2},
hold for more general (not necessarily positive) multiplicity functions.

The paper is organized as follows.
In Section \ref{sec:prep} we gather various facts and preparatory results needed for kernel estimates.
In Section \ref{sec:ker} we demonstrate our technique by proving standard estimates for kernels
associated with the operators mentioned above. Finally, in Section \ref{sec:CZ} we conclude
that the operators in question can be interpreted as Calder\'on-Zygmund operators.

\textbf{Notation.}
Throughout the paper we use a fairly standard notation with essentially all symbols referring to the space
of homogeneous type $(\R,d\mu_{\alpha},|\cdot|)$. For the sake of clarity, we now explain all
symbols and relations that might lead to a confusion. Given $x,y \in \R$, $\beta \in \RR$ and a multi-index 
$n \in \mathbb{N}^d$, $\mathbb{N}=\{0,1,2,\ldots\}$, we denote
\begin{align*}
e_j & \equiv \textrm{$j$th coordinate vector in $\R$},  \\
\mathbf{1} & = (1,\ldots,1) \in \mathbb{N}^d,\\
|n| & = n_1 + \ldots + n_d, \qquad \textrm{(length of $n$)}\\
B(x,r) & = \{y \in \R : |x-y|<r\}, \qquad r>0, \qquad 
	\textrm{(balls in $\R$)}\\
xy & = (x_1 y_1,\ldots, x_d y_d),\\
x^\beta & = x_1^{\beta_1}\cdot \ldots \cdot x_d^{\beta_d},\\
x \le y & \equiv x_i \le y_i, \qquad i=1,\ldots,d,\\
x \vee y & = (\max\{x_1,y_1\},\ldots,\max\{x_d,y_d\}),\\
\partial_{x_i} & = \partial \slash \partial x_i, \qquad i=1,\ldots,d,\qquad 
	\textrm{(ordinary partial derivatives)}\\
\partial_x^n & = \partial_{x_1}^{n_1} \circ \ldots \circ \partial_{x_d}^{n_d},\\
\delta_{x_i} & = \partial_{x_i} + x_i, \qquad i=1,\ldots,d, \qquad \textrm{(Laguerre partial derivatives)}\\
\delta_x^n & = \delta_{x_1}^{n_1}\circ \ldots \circ \delta_{x_d}^{n_d},\\
(\partial^k_x F)^n & = (\partial^k_{x_1}F)^{n_1} \cdot \ldots \cdot (\partial^k_{x_d}F)^{n_d},
\qquad k=1,2,\ldots,
\end{align*}
where in the last identity $F$ is a suitable function on $\R$ defined in a moment.
 
Further, we also introduce the following notation and abbreviations:
\begin{align*}
q_{\pm} & = q_{\pm}(x,y,s),  \\
\e\b & = \exp\Big(-\frac{1}{4\zeta} q_{+} - \frac{\zeta}{4} q_{-}\Big), \\ 
F & = F(\zeta,q_{\pm}) = \ln \e\b, \\
\lo(\zeta) & = \log\frac{1+\zeta}{1-\zeta},\\
\Psi_{\pm}^j & = \Psi_{\pm}^j(x,y,s) = x_j\pm y_j s_j, \qquad j=1,\ldots,d,\\
\Psi_{\pm} & = (\Psi_{\pm}^1,\ldots,\Psi_{\pm}^d),\\
\Phi_{\pm}^j & = \Phi_{\pm}^j(x,y,s) = y_j\pm x_j s_j, \qquad j=1,\ldots,d,
\end{align*}
where $x,y \in \R$, $s \in [-1,1]^d$ and $\zeta \in (0,1)$.

While writing estimates, we will use the notation $X \lesssim Y$ to
indicate that $X \le CY$ with a positive constant $C$ independent of significant quantities. We shall
write $X \simeq Y$ when simultaneously $X \lesssim Y$ and $Y \lesssim X$.

\section{Preparatory facts and results} \label{sec:prep}

Let $\alpha \in (-1,\infty)^d$. 
By means of \eqref{Irec} and \eqref{Schlafli} the Laguerre heat kernel can be written as
\begin{equation} \label{G}
G_t^{\alpha}(x,y) = \sum_{\eps \in \{0,1\}^d} C_{\alpha,\eps}
	\Big( \frac{1-\zeta^2}{2\zeta}\Big)^{d+|\alpha|+2|\eps|} (xy)^{2\eps} \int
	\exp\Big(-\frac{1}{4\zeta} q_{+} - \frac{\zeta}{4} q_{-}\Big) \, \1,
\end{equation}
where $C_{\alpha,\eps} = [ 2(\alpha+\mathbf{1})]^{\mathbf{1}-\eps}$
and $t$ and $\zeta$ are related as in \eqref{tzeta}. Here and later on, for the sake of brevity, 
we omit the set of integration $[-1,1]^d$ in integrals against $\1$.

The following generalization of \cite[Proposition 5.9]{NS1} is a crucial point in our method of estimating
kernels. It establishes a relation between expressions involving certain integrals with respect to
$\Pi_{\alpha+\mathbf{1}+\eps}(ds)$ and the standard estimates for the space $(\R,d\mu_{\alpha},|\cdot|)$.
\begin{lem} \label{lem:bridge}
Let $\alpha \in (-1,\infty)^d$. Assume that $\xi,\kappa \in [0,\infty)^d$ are fixed and such that
$\alpha + \xi + \kappa \in [-1\slash 2,\infty)^d$. Then, uniformly in $x,y \in \R$, $x\neq y$,
\begin{align*}
(x+y)^{2\xi} \int \Big( \frac{1}{q_{+}}\Big)^{d + |\alpha|+ |\xi|} \Pi_{\alpha+\xi+\kappa}(ds)
& \lesssim \frac{1}{\mu_{\alpha}(B(x,|x-y|))}, \\ 
(x+y)^{2\xi} \int \Big( \frac{1}{q_{+}}\Big)^{d + |\alpha|+ |\xi|+1\slash 2} \Pi_{\alpha+\xi+\kappa}(ds)
& \lesssim \frac{1}{|x-y|\,\mu_{\alpha}(B(x,|x-y|))}. 
\end{align*}
\end{lem}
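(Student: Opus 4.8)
The plan is to reduce both inequalities to purely real-variable estimates by first understanding the measure $\mu_{\alpha}(B(x,|x-y|))$ and the quantity $q_{+}=q_{+}(x,y,s)$ well enough to drop the integral against $\Pi_{\alpha+\xi+\kappa}(ds)$. First I would record the standard ball-measure estimate in this space of homogeneous type: for $x,y\in\R$,
\begin{equation*}
\mu_{\alpha}(B(x,|x-y|)) \simeq |x-y|^d \prod_{i=1}^d \big(x_i+|x-y|\big)^{2\alpha_i+1} \simeq |x-y|^d \prod_{i=1}^d \big(x_i+y_i\big)^{2\alpha_i+1},
\end{equation*}
the last comparison being valid since $x_i+y_i \le 2(x_i+|x-y|)$ and $x_i+|x-y|\le x_i+ (x_i+y_i) \le 3(x_i+y_i)$. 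Next I would observe the elementary pointwise bound $q_{+}(x,y,s) \ge |x-y|^2$ for all $s\in[-1,1]^d$ (indeed $q_+ \ge |x|^2+|y|^2 - 2\sum x_iy_i = |x-y|^2$ since each $|s_i|\le 1$ and $x_i,y_i>0$), together with the trivial $q_+ \ge |x|^2,|y|^2 \ge$ (sums of squares of coordinates). These two facts already give $q_+ \gtrsim |x-y|^2 + \sum_i (x_i+y_i)^2 \cdot 0$; more usefully, combining $q_+\ge|x-y|^2$ with $q_+ \ge x_i^2, y_i^2$ componentwise one gets, for each $i$, $q_+ \gtrsim |x-y|^2$ and also an estimate controlling $(x_i+y_i)^2/q_+ \lesssim 1 + (x_i+y_i)^2/|x-y|^2$ which is not bounded --- so the crude bound alone will not suffice when $(x+y)^{2\xi}$ is large, and this is where the $\Pi$-integral must genuinely be used.

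The real mechanism, therefore, is the following: the density of $\Pi_{\alpha+\xi+\kappa}$ vanishes like $(1-s_i^2)^{\alpha_i+\xi_i+\kappa_i-1/2}$ at $s_i=\pm1$, and near $s_i=1$ one has $q_+ \simeq |x-y|^2 + x_iy_i(1-s_i^2)$ roughly (after writing $|x|^2+|y|^2+2x_iy_is_i$ and similar for the full sum), so that the singularity of $1/q_+$ is tempered by the vanishing of the density. Concretely I would split the unit cube dyadically in each coordinate according to the size of $1-s_i^2$, or equivalently pass to the one-dimensional sub-integrals and use the scaling substitution that makes $q_+$ homogeneous. The cleanest route is to prove the one-dimensional statement ($d=1$) first by a direct computation --- substituting and comparing $\int_{-1}^1 (1-s^2)^{\beta-1/2}(|x-y|^2 + c(1-s^2))^{-\gamma}\,ds$ against $|x-y|^{-2\gamma+?}$ via the change of variable $u=1-s^2$ --- and then obtain the $d$-dimensional case by Fubini and the product structure of $\Pi_{\alpha+\xi+\kappa}$ and of the ball measure; the hypothesis $\alpha+\xi+\kappa\in[-1/2,\infty)^d$ guarantees every one-dimensional density is locally integrable, so no boundary obstruction arises.

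I expect the main obstacle to be the bookkeeping in the $d$-dimensional gluing: $q_{+}(x,y,s)$ does not factor as a product over coordinates the way $\Pi_{\alpha+\xi+\kappa}(ds)$ and $\mu_{\alpha}(B(x,|x-y|))$ do, so one cannot simply multiply $d$ copies of the one-dimensional estimate. The fix is to note that $q_+ = \sum_i (x_i^2+y_i^2+2x_iy_is_i) \ge \max_i (x_i^2+y_i^2+2x_iy_is_i)$ and also $q_+ \ge |x-y|^2$; one then chooses, for a given $s$, the coordinate realizing the maximum, bounds $q_+$ below by a single-coordinate expression plus $|x-y|^2$ in that coordinate, integrates that coordinate using the one-dimensional lemma, and bounds the remaining $d-1$ integrations trivially by using that $\Pi_{\alpha+\xi+\kappa}$ is a finite measure together with $q_+\gtrsim (x_j+y_j)^2$ on the complementary region where $x_j$ is comparable to $x_j+y_j$ --- or, more systematically, one exploits that the full claim for the exponent $d+|\alpha|+|\xi|+\tau$ (with $\tau\in\{0,1/2\}$) can be reduced to the one-dimensional claim with exponent $1+\alpha_i+\xi_i+\tau$ in one chosen variable and exponent exactly $\alpha_j+\xi_j$ (the "endpoint", giving a bounded integral) in the others, after peeling off the $|x-y|^2 \le q_+$ factors coordinate by coordinate. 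Once this combinatorial splitting is set up, each piece is an elementary Beta-function computation, and the two displayed inequalities follow by reading off the powers of $|x-y|$; the second one differs from the first merely by the extra $1/2$ in the exponent, which produces exactly one extra factor of $|x-y|^{-1}$.
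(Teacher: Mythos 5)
Your overall skeleton (a one-dimensional Beta-type estimate for the $\Pi$-integral, the ball volume formula, the crude bound $q_{+}\ge |x-y|^2$, and an iteration over coordinates) is the right one and is close to how the paper argues, but two of your reductions fail, and the second failure is exactly the point of the lemma. First, the claimed equivalence $\prod_{i}(x_i+|x-y|)^{2\alpha_i+1}\simeq \prod_{i}(x_i+y_i)^{2\alpha_i+1}$ is false for $d\ge 2$: your inequality $x_i+|x-y|\le x_i+(x_i+y_i)$ presupposes $|x-y|\le x_i+y_i$, but $|x-y|$ is the full Euclidean distance and can be much larger than $x_i+y_i$ (take $x=(\epsilon,N)$, $y=(\epsilon,0)$ and $i=1$). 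Only the one-sided bound $x_i+y_i\lesssim x_i+|x-y|$ holds, and for a coordinate with $2\alpha_i+1>0$ this goes the wrong way: the target $1\slash \mu_{\alpha}(B(x,|x-y|))$ is genuinely smaller than $|x-y|^{-d}\prod_i (x_i+y_i)^{-(2\alpha_i+1)}$, so proving the bound in the $(x_i+y_i)$ form does not suffice.

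Second, and more importantly, your peeling scheme applies the one-dimensional estimate in every coordinate, but that estimate can only produce the factor $(x_j+|x-y|)^{-(2\alpha_j+1)}$ when $\alpha_j\ge -1\slash 2$: it yields $A^{-(\alpha_j+1\slash 2)}$ with $A\gtrsim (x_j+|x-y|)^2$, which is a useful (bounded) quantity only for $\alpha_j+1\slash 2\ge 0$. For a coordinate with $\alpha_j<-1\slash 2$ --- the very case this lemma is designed to cover, since only $\alpha_j+\xi_j+\kappa_j\ge -1\slash 2$ is assumed --- the needed factor $(x_j+|x-y|)^{-(2\alpha_j+1)}$ grows with $x_j$ and cannot come out of a Beta-function computation in $s_j$. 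The paper's fix, absent from your sketch, is to note that for such $j$ one may replace $(x_j+|x-y|)^{2\alpha_j+1}$ by the smaller quantity $|x-y|^{2\alpha_j+1}$ (the exponent being negative), and that power of $|x-y|$ is then harvested from the crude bound $q\ge |x-y|^2$ alone, reserving the one-dimensional lemma for the coordinates with $\alpha_j\ge -1\slash 2$. (Relatedly, one should first pass from $q_{+}$ to $q_{-}$ using the symmetry of $\Pi$ so that the small values of $q$ occur where the density vanishes, and reduce to $\xi=0$ by replacing $\alpha$ with $\alpha+\xi$ and using $x_i+y_i\lesssim x_i+|x-y|$; your remark that $\alpha+\xi+\kappa\in[-1\slash 2,\infty)^d$ guarantees integrability of the density addresses only the finiteness of $\Pi_{\alpha+\xi+\kappa}$, not this exponent obstruction.)
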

To prove this we need two auxiliary results. 
The first one is a natural extension of \cite[Proposition 3.2]{NS1}.
\begin{lem} \label{ball}
Let $\alpha \in (-1,\infty)^d$. Then
$$
\mu_{\alpha}(B(x,r)) \simeq r^d \prod_{i=1}^d (x_i + r)^{2\alpha_i+1}, \qquad x \in \R, \quad r>0.
$$
\end{lem}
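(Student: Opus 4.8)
This is a product-space computation, a natural extension of \cite[Proposition 3.2]{NS1} from the range $\alpha_i\ge -1/2$ to all $\alpha_i>-1$. The first step is to replace the Euclidean ball by a box: since
\[
\prod_{i=1}^d \Big(x_i-\frac{r}{\sqrt d},\,x_i+\frac{r}{\sqrt d}\Big) \subseteq B(x,r) \subseteq \prod_{i=1}^d (x_i-r,\,x_i+r)
\]
and $\mu_{\alpha}$ is the product of the one-dimensional measures $t^{2\alpha_i+1}\,dt$ on $(0,\infty)$, the assertion reduces to the one-variable estimate: with constants depending only on $\beta\in(-1,\infty)$,
\[
\int_{(a-\rho)_{+}}^{a+\rho} t^{2\beta+1}\,dt \simeq \rho\,(a+\rho)^{2\beta+1}, \qquad a>0,\ \rho>0,
\]
where $(a-\rho)_{+}=\max(a-\rho,0)$. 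Granting this, one applies it in each coordinate with $\rho=r$ for the upper bound and $\rho=r/\sqrt d$ for the lower bound, uses $\rho\simeq r$ and $a+\rho\simeq a+r$ (constants depending on $d$), and multiplies the $d$ resulting estimates.

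For the one-variable estimate I would split according to the relative size of $a$ and $\rho$. If $\rho\ge a$, then $(a-\rho)_{+}=0$, $a+\rho\simeq\rho$, and the direct computation $\int_0^{a+\rho}t^{2\beta+1}\,dt=(a+\rho)^{2\beta+2}/(2\beta+2)\simeq\rho(a+\rho)^{2\beta+1}$ settles this case. If $\rho<a$, then $a\simeq a+\rho$, and for the lower bound it is enough to integrate over the subinterval $[\max(a-\rho,a/2),a]\subseteq[a/2,a]$, which has length $\min(\rho,a/2)\ge\rho/2$ and on which $t^{2\beta+1}\simeq a^{2\beta+1}$ (with constant depending on $\beta$ only); hence $\int_{a-\rho}^{a+\rho}t^{2\beta+1}\,dt\gtrsim\rho\,a^{2\beta+1}\simeq\rho(a+\rho)^{2\beta+1}$, regardless of the sign of $2\beta+1$. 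Likewise, when $2\beta+1\ge 0$ the upper bound is immediate: $t^{2\beta+1}\le(a+\rho)^{2\beta+1}$ on the interval of integration, of length at most $2\rho$.

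The only delicate point — and the genuinely new ingredient beyond \cite{NS1} — is the upper bound in the case $\rho<a$ with $2\beta+1<0$ (i.e. $\beta\in(-1,-1/2)$, so $2\alpha_i+1<0$), since then $t^{2\beta+1}$ blows up as $t\to0^{+}$ and cannot be bounded over the whole interval by its value at an endpoint. Here I would split once more. If $\rho\le a/2$, then $[a-\rho,a+\rho]\subseteq[a/2,2a]$, on which $t^{2\beta+1}\le(a/2)^{2\beta+1}\simeq a^{2\beta+1}$, so the integral is $\lesssim\rho\,a^{2\beta+1}$. If $a/2<\rho<a$, then $\rho\simeq a$ and, using $2\beta+2>0$,
\[
\int_{a-\rho}^{a+\rho} t^{2\beta+1}\,dt \le \int_0^{a+\rho} t^{2\beta+1}\,dt = \frac{(a+\rho)^{2\beta+2}}{2\beta+2} \simeq a^{2\beta+2} \simeq \rho\,a^{2\beta+1},
\]
which completes the estimate. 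Thus the whole proof is elementary once the ball is compared with a box; the main obstacle is the integrable-but-unbounded negative power near the origin in the extended range $\alpha_i\in(-1,-1/2)$, dealt with by the radius splitting above.
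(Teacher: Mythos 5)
Your proof is correct, but it takes a different route from the paper's. You sandwich the ball between two coordinate boxes centred at $x$ and then compute the resulting one--dimensional integrals $\int_{(a-\rho)_+}^{a+\rho}t^{2\beta+1}\,dt$ explicitly, handling the singular range $2\beta+1<0$ by the extra splitting $\rho\le a/2$ versus $a/2<\rho<a$; every step is elementary and the argument is entirely self-contained. The paper instead compares $B(x,r)$ with \emph{shifted} cubes $Q_{\eps}(x,r)=\prod_i[x_i+\eps_i r,\,x_i+r+\eps_i r]$, invokes the doubling property of $\mu_{\alpha}$ to identify $\mu_{\alpha}(Q_{\eps}(x,r))\simeq\mu_{\alpha}(B(x,r))$, and then applies the mean value theorem for integrals, choosing $\eps_i=1$ exactly when $2\alpha_i+1<0$ so that the intermediate point $\theta_i$ lands on the favourable side of $x_i+r$ for the monotone function $t\mapsto t^{2\alpha_i+1}$; the reverse inequality comes from the complementary cube $Q_{\mathbf{1}-\eps}$. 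The paper's argument is shorter and avoids any integration by hand, but it leans on the doubling property asserted in the introduction (a fact that is itself often derived from the very volume formula being proved, though it can be obtained independently since power weights with exponent $>-1$ are $A_\infty$); your version buys independence from that external input at the cost of a slightly longer case analysis. Both are complete proofs.
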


\begin{proof}
Let $x \in \R$ and $r>0$. Given $\eps \in \{0,1\}^d$, we consider the cube
$Q_{\eps}(x,r)$ being a product of the intervals  $[x_i+\eps_i r, x_i + r + \eps_i r]$, $i=1,\ldots,d$.
Since $\mu_{\alpha}$ possesses the doubling property, for each $\eps \in \{0,1\}^d$ we have
$$
\mu_{\alpha}(Q_{\eps}(x,r)) \simeq \mu_{\alpha}(B(x,r)), \qquad x \in \R, \quad r>0.
$$
Now for a fixed $\alpha$ we choose $\eps$ such that $\eps_i=1$ when $\alpha_i < -1\slash 2$ and $\eps_i=0$
if $\alpha_i \ge -1\slash 2$. By the mean value theorem for integration,
$$
\mu_{\alpha}(Q_{\eps}(x,r)) \simeq r^d \theta^{2\alpha+\mathbf{1}}, \qquad x \in \R, \quad r>0,
$$
where $\theta=\theta(x,r)$ is a point in $Q_{\eps}(x,r)$. But the right-hand side here is, by the choice
of $\eps$, dominated by $r^d \prod_{i=1}^d(x_i+r)^{2\alpha_i+1}$. A similar argument shows that
$$
\mu_{\alpha}(Q_{\mathbf{1}-\eps}(x,r)) \gtrsim r^d \prod_{i=1}^d (x_i+r)^{2\alpha_i+1}, \qquad x \in \R,
	\quad r > 0.
$$ 
The conclusion follows.
\end{proof}

The second result we need is a slightly more general version of \cite[Lemma 5.8]{NS1}.
\begin{lem} \label{58gen}
Let $a\ge -1\slash 2$, $b \ge 0$ and $\lambda > 0$ be fixed. Then
$$
\int_{-1}^1 \frac{\Pi_{a+b}(ds)}{(A-Bs)^{a+1\slash 2+\lambda}} 
	\lesssim \frac{1}{A^{a+1\slash 2}(A-B)^{\lambda}}, \qquad A>B>0.
$$
\end{lem}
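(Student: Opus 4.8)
The plan is to dispose of the trivial part of the integral and of the exceptional atomic case, and then reduce the remaining two-parameter estimate to a clean one-variable inequality via a change of variables.

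First I would treat the degenerate case $a+b=-1\slash 2$, which -- since $a\ge -1\slash 2$ and $b\ge 0$ -- forces $a=-1\slash 2$ and $b=0$; then $\Pi_{a+b}=\Pi_{-1\slash 2}$ is the normalized sum of unit masses at $\pm1$ and $a+1\slash 2=0$, so the left-hand side equals $(2\pi)^{-1\slash 2}\big((A+B)^{-\lambda}+(A-B)^{-\lambda}\big)\le \sqrt{2\slash\pi}\,(A-B)^{-\lambda}$, which is the claim. From now on $a+b>-1\slash 2$, so $\Pi_{a+b}$ has the density $c^{-1}(1-s^2)^{a+b-1\slash 2}$ with $c=\sqrt\pi\,2^{a+b}\Gamma(a+b+1\slash 2)>0$ and exponent $a+b-1\slash 2>-1$. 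Writing $\int_{-1}^1=\int_{-1}^0+\int_0^1$, the part over $[-1,0]$ is immediate: there $A\le A-Bs\le 2A$, hence (using $a+1\slash 2+\lambda>0$, and then $A-B<A$ together with $\lambda>0$) this part is bounded by $A^{-(a+1\slash 2+\lambda)}\le A^{-(a+1\slash 2)}(A-B)^{-\lambda}$.

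For the part over $[0,1]$ I would use $1+s\simeq 1$, substitute $s=1-u$ and then $u=\frac{A-B}{B}v$. Putting $T=\frac{B}{A-B}$ and keeping track of the powers of $A-B$ and $B$, the integral $\int_0^1(A-Bs)^{-(a+1\slash 2+\lambda)}\,\Pi_{a+b}(ds)$ becomes a constant multiple of $(A-B)^{b-\lambda}B^{-(a+b+1\slash 2)}I(T)$, where
$$
I(T)=\int_0^T \frac{v^{a+b-1\slash 2}}{(1+v)^{a+1\slash 2+\lambda}}\,dv .
$$
Since $1+T=\frac{A}{A-B}$, the desired bound on this term is exactly equivalent to the one-variable inequality $I(T)\lesssim T^{a+b+1\slash 2}(1+T)^{-(a+1\slash 2)}$ for all $T>0$. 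I would prove the latter by splitting at $T=1$: for $T\le 1$ one has $(1+v)^{a+1\slash 2+\lambda}\simeq 1$ on $[0,T]$, so $I(T)\simeq\int_0^T v^{a+b-1\slash 2}\,dv\simeq T^{a+b+1\slash 2}$, matching the right-hand side since there $1+T\simeq 1$; for $T\ge 1$ the right-hand side is $\simeq T^{b}$, and, using $(1+v)^{a+1\slash 2+\lambda}\ge v^{a+1\slash 2+\lambda}$ for $v\ge 1$,
$$
I(T)=\int_0^1+\int_1^T\le \frac{1}{a+b+1\slash 2}+\int_1^T v^{b-1-\lambda}\,dv\le \frac{1}{a+b+1\slash 2}+T^{b}\int_1^\infty v^{-1-\lambda}\,dv\lesssim T^{b},
$$
the last step using $\lambda>0$. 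Substituting back $T=B\slash(A-B)$ and recalling $1+T=A\slash(A-B)$ collapses $(A-B)^{b-\lambda}B^{-(a+b+1\slash 2)}I(T)$ to $A^{-(a+1\slash 2)}(A-B)^{-\lambda}$, and together with the $[-1,0]$ part this yields the assertion.

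The only genuinely delicate point I expect is the change of variables together with the exponent bookkeeping that produces the factor $(A-B)^{b-\lambda}B^{-(a+b+1\slash 2)}$ and identifies the remaining integral with $I(T)$ under $T=B\slash(A-B)$; once one is down to the one-variable inequality, everything is routine one-variable calculus. It is worth noting that, just as in \cite[Lemma~5.8]{NS1}, the hypothesis $\lambda>0$ enters only to guarantee $\int_1^\infty v^{-1-\lambda}\,dv<\infty$ in the range $T\ge 1$.
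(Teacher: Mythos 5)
Your argument is correct, but it takes a genuinely different (and much longer) route than the paper. The paper's proof is a two-line reduction to the already-known case $b=0$, i.e.\ to \cite[Lemma 5.8]{NS1}: since $A-Bs\le A+B$ for $s\in[-1,1]$ and $b\ge 0$, one writes
$$
\frac{1}{(A-Bs)^{a+1\slash 2+\lambda}}\le \frac{(A+B)^{b}}{(A-Bs)^{a+b+1\slash 2+\lambda}},
$$
applies the cited lemma with $a$ replaced by $a+b$ (legitimate because $a+b\ge -1\slash 2$), and finishes with $A+B\simeq A$. What you have done instead is to prove the generalized statement from scratch -- in effect reproving \cite[Lemma 5.8]{NS1} along the way -- via the split at $s=0$, the substitutions $s=1-u$, $u=\frac{A-B}{B}v$, and the reduction to the one-variable inequality $I(T)\lesssim T^{a+b+1\slash 2}(1+T)^{-(a+1\slash 2)}$. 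I checked your exponent bookkeeping (the factor $(A-B)^{b-\lambda}B^{-(a+b+1\slash 2)}$ and the identification $1+T=A\slash(A-B)$) and the two regimes $T\le 1$, $T\ge 1$; everything is sound, including the separate treatment of the atomic case $a+b=-1\slash 2$ and the observation that $\lambda>0$ is only needed for $\int_1^\infty v^{-1-\lambda}\,dv<\infty$. The trade-off is clear: the paper's reduction is essentially free but leans on an external lemma, whereas your proof is self-contained and makes the mechanism behind that lemma explicit; if you want to match the paper's economy, note that your opening step on $[-1,0]$ combined with the weight $(A+B)^b$ trick would let you quote the $b=0$ case directly.
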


\begin{proof}
When $b=0$ this is precisely \cite[Lemma 5.8]{NS1}. Using this special case we can write
$$
\int_{-1}^1 \frac{\Pi_{a+b}(ds)}{(A-Bs)^{a+1\slash 2+\lambda}} \le (A+B)^b
	\int_{-1}^1 \frac{ \Pi_{a+b}(ds)}{(A-Bs)^{a+b+1\slash 2+\lambda}}
	\lesssim \frac{(A+B)^b}{A^{a+b+1\slash 2}(A-B)^{\lambda}}.
$$
Since $(A+B) \simeq A$, the desired bound follows.
\end{proof}

\begin{proof}[Proof of Lemma \ref{lem:bridge}]
It suffices to verify the first estimate of the lemma. Then the second one follows immediately by
observing that $q_{+} \ge |x-y|^2$. Further, our task can be reduced to showing that
\begin{equation} \label{red}
\int \Big( \frac{1}{q_{+}} \Big)^{d+|\alpha|} \, \Pi_{\alpha+\kappa}(ds) \lesssim
	\frac{1}{\mu_{\alpha}(B(x,|x-y|))}, \qquad x,y \in \R, \quad x \neq y,
\end{equation}
provided that $\alpha + \kappa \in [-1\slash 2,\infty)^d$. Indeed, replacing in \eqref{red}
$\alpha$ by $\alpha+\xi$ and using Lemma \ref{ball} we get
\begin{align*}
(x+y)^{2\xi} \int \Big(\frac{1}{q_+} \Big)^{d+|\alpha+\xi|} \, \Pi_{\alpha+\xi+\kappa}(ds) & \lesssim
	(x+y)^{2\xi} \frac{1}{\mu_{\alpha+\xi}(B(x,|x-y|))} \\
& \simeq \frac{(x+y)^{2\xi}}{|x-y|^d \prod_{i=1}^d (x_i+|x-y|)^{2(\alpha_i+\xi_i)+1}} \\
& \lesssim \frac{1}{|x-y|^d \, \prod_{i=1}^d (x_i+|x-y|)^{2\alpha_i+1}} 
\simeq \frac{1}{\mu_{\alpha}(B(x,|x-y|))},
\end{align*}
where the third relation follows from the bound $x_i+y_i \lesssim x_i + |x-y|$.

It remains to verify \eqref{red}. Let $\mathcal{I}_{\alpha} = \{j: \alpha_j < -1\slash 2\}$. 
Taking into account Lemma \ref{ball}, the symmetry of $\Pi_{\alpha+\kappa}$ and the estimate
$$
\frac{1}{|x-y|^{2\alpha_j+1}} \le \frac{1}{(x_j+|x-y|)^{2\alpha_j+1}}, \qquad \alpha_j < -1\slash 2,
$$
we see that it is enough to show the bound
\begin{align}\nonumber 
& \int \Big(\frac{1}{q_{-}} \Big)^{d+|\alpha|}\, \Pi_{\alpha+\kappa}(ds)\\ & \quad \lesssim 
	\frac{1}{|x-y|^d \prod_{i \in \mathcal{I}_{\alpha}}|x-y|^{2\alpha_i+1} 
		\prod_{j \notin \mathcal{I}_{\alpha}} (x_j+|x-y|)^{2\alpha_j+1}}, 
		\qquad x,y \in \R, \quad x \neq y, \label{red2}
\end{align}
with the usual convention concerning empty products. Here, without any loss of generality,
we may assume that $\mathcal{I}_{\alpha}=\{1,\ldots, k\}$ for some $k=0,1,\ldots,d$ 
(by convention, $k=0$ corresponds to
$\mathcal{I}_{\alpha}=\emptyset$). Then proving \eqref{red2} consists of two steps.  

\noindent \textbf{Step 1.} If $\mathcal{I}_{\alpha}=\{1,\ldots,d\}$, we go immediately to Step 2. 
Otherwise we proceed as in the proof of \cite[Proposition 5.9]{NS1}, using Lemma \ref{58gen} instead
of \cite[Lemma 5.8]{NS1}. This either produces directly \eqref{red2} in case
$\mathcal{I}_{\alpha}=\emptyset$, or leads to the estimate
\begin{align*}
& \int \Big(\frac{1}{q_{-}} \Big)^{d+|\alpha|}\, \Pi_{\alpha+\kappa}(ds) \lesssim 
	\frac{1}{\prod_{j=k+1}^d (x_j+|x-y|)^{2\alpha_j+1}} \\ & \quad \times \int_{[-1,1]^k}
		\frac{1}{(|x|^2+|y|^2-2\sum_{i=1}^k x_i y_i s_i - 2\sum_{j=k+1}^d x_j y_j)^{d+\sum_{i=1}^k \alpha_i
		-(d-k)\slash 2}}\, \Pi_{\tilde{\alpha}+\tilde{\kappa}}(d\tilde{s}),
\end{align*}
where $\tilde{\cdot}$ indicates the restriction to the first $k$ axes.

\noindent \textbf{Step 2.} Taking into account the last estimate, the fact that the measure
$\Pi_{\tilde{\alpha}+\tilde{\kappa}}$ is finite and the bounds
$$
d + \sum_{i=1}^k \alpha_i - (d-k)\slash 2 \ge (d-k)\slash 2 \ge 0,
$$
$$
|x|^2 + |y|^2 -2 \sum_{i\in \mathcal{I}_{\alpha}} x_i y_i s_i 
	- 2\sum_{j \notin \mathcal{I}_{\alpha}} x_j y_j \ge |x-y|^2,
$$
we conclude that
$$
\int \Big(\frac{1}{q_{-}} \Big)^{d+|\alpha|}\, \Pi_{\alpha+\kappa}(ds) \lesssim
	\frac{1}{\prod_{j \notin \mathcal{I}_{\alpha}}(x_j+|x-y|)^{2\alpha_j+1}}\; 
		\frac{1}{|x-y|^{d+k+\sum_{i=1}^k 2\alpha_i}}.
$$
This implies \eqref{red2}. The proof is finished.
\end{proof}

The remaining part of this section contains lemmas that are needed to control the relevant kernels
and their gradients by means of the estimates from Lemma \ref{lem:bridge}. To prove some of the
technical results below
we will use Fa\`a di Bruno's formula for the $N$th derivative, $N \ge 1$, of the composition
of two functions (see \cite{Jo} for the related references and interesting historical remarks),
\begin{equation} \label{Faa}
\partial_{x}^N(g\circ f)(x) = \sum \frac{N!}{p_1! \cdot \ldots \cdot p_N!} \;\partial^{p_1+\ldots+p_N}
	g\circ f(x) \bigg( \frac{\partial_x^1 f(x)}{1!}\bigg)^{p_1}\cdot \ldots \cdot
	\bigg( \frac{\partial_x^N f(x)}{N!}\bigg)^{p_N},
\end{equation}
where the summation runs over all $p_1,\ldots,p_N \ge 0$ such that $p_1+2p_2+\ldots+N p_N = N$.

\begin{lem} \label{lem:deltax}
Let $d \ge 1$, $n \in \N^d$, $\eps \in \{0,1\}^d$. Then
\begin{align*}
& \delta_x^n \big[ (xy)^{2\eps} \ee\b\big] \\
& \quad = y^{2\eps} \sum_{\eta \in \{0,1,2\}^d} x^{2\eps-\eta\eps}
	\sum_{\substack{k,l \in \N^d \\ k+2l \le n-\eta \eps}} 
	\chi_{\{n \ge \eta\eps\}} P_{n,\eps,\eta,k,l}(x)
	(\partial_x F)^k (\partial_x^2 F)^l \ee\b,
\end{align*}
where 
$$
P_{n,\eps,\eta,k,l}(x) = \prod_{i=1}^d P_{n_i,\eps_i,\eta_i,k_i,l_i}(x_i)
$$
is a product of one-dimensional polynomials of degrees $n_i-\eta_i \eps_i-k_i-2l_i$, respectively.
\end{lem}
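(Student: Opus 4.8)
The plan is to prove the formula by induction on the ``total order'' $|n|$, reducing everything to the one‑dimensional case by a tensor‑product argument, since $\delta_x^n = \delta_{x_1}^{n_1}\circ\cdots\circ\delta_{x_d}^{n_d}$ acts variable by variable and both the exponential $\e\b$ and the factor $(xy)^{2\eps}$ split as products over the coordinates only after one expands $F(\zeta,q_\pm)$; here the subtlety is that $F$ itself does \emph{not} factor (it contains the cross terms $\sum_i x_iy_is_i$), so the induction has to be carried out on $\partial_x F$ and $\partial_x^2 F$ treated as black boxes. First I would record the key elementary facts about $F = -\frac{1}{4\zeta}q_+ - \frac{\zeta}{4}q_-$ as a function of a single variable $x_j$: it is a quadratic polynomial in $x_j$, so $\partial_{x_j}F$ is affine in $x_j$, $\partial_{x_j}^2 F$ is a constant (in $x_j$), and $\partial_{x_j}^m F = 0$ for $m \ge 3$. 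This is what makes Fa\`a di Bruno collapse: applying \eqref{Faa} to $\partial_{x_j}^N(\exp\circ F)$, only the partitions with $p_m = 0$ for $m\ge 3$ survive, leaving exactly a sum over $k_j + 2l_j \le N$ of monomials $(\partial_{x_j}F)^{k_j}(\partial_{x_j}^2F)^{l_j}\,\e\b$ with polynomial‑in‑$x_j$ coefficients of the correct degree.

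Next I would handle the interaction of $\delta_{x_j} = \partial_{x_j} + x_j$ with the prefactor $x_j^{2\eps_j}$. The cleanest route is to first compute $\delta_{x_j}^{n_j}\big[x_j^{2\eps_j}\,\e\b\big]$ directly. Using the Leibniz‑type rule for $\delta_{x_j}^{n_j}$ one sees that $\delta_{x_j}$ applied to $x_j^{2\eps_j}\cdot(\textrm{polynomial in }x_j)\cdot(\partial_{x_j}F)^{k_j}(\partial_{x_j}^2F)^{l_j}\e\b$ produces terms of the same shape: the derivative can fall on $x_j^{2\eps_j}$ (lowering the power of $x_j$ by one, which is the source of the $\eta_j\eps_j$ shift and the indicator $\chi_{\{n\ge\eta\eps\}}$ — once the power $2\eps_j$ is exhausted it cannot be lowered further, and since $\eps_j\in\{0,1\}$ the exponent $2\eps_j - \eta_j\eps_j$ with $\eta_j\in\{0,1,2\}$ runs through exactly the admissible residual powers), on the polynomial coefficient (keeping it polynomial, adjusting degree), on one of the $(\partial_{x_j}F)$ factors (raising $k_j$ to $k_j-1$ times $\partial_{x_j}^2F$, i.e. trading a $k$ for an $l$, or producing lower‑order $F$‑derivative data absorbed into the polynomial since $\partial_{x_j}F$ is affine), on a $(\partial_{x_j}^2F)$ factor (giving $0$ since $\partial_{x_j}^2F$ is constant in $x_j$), or on $\e\b$ itself (multiplying by $\partial_{x_j}F$, raising $k_j$ by one), plus the multiplication‑by‑$x_j$ term (raising the polynomial degree by one). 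In every case the constraint $k_j+2l_j \le n_j - \eta_j\eps_j$ and the degree count $n_j-\eta_j\eps_j-k_j-2l_j$ are preserved under the inductive step; I would verify this bookkeeping once for a single $\delta_{x_j}$ and then the claim for $\delta_{x_j}^{n_j}$ follows by a straightforward induction on $n_j$, with the base case $n_j=0$ trivial.

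Finally I would assemble the $d$‑dimensional statement: because $\delta_x^n$ is the composition of the commuting one‑variable operators $\delta_{x_j}^{n_j}$ and because $(xy)^{2\eps}\e\b = \big(\prod_j x_j^{2\eps_j}\big)\big(\prod_j y_j^{2\eps_j}\big)\e\b$, applying the one‑dimensional result coordinate by coordinate and multiplying out the products of the one‑dimensional polynomials $P_{n_i,\eps_i,\eta_i,k_i,l_i}(x_i)$, of the factors $(\partial_{x_i}F)^{k_i}(\partial_{x_i}^2F)^{l_i}$, and of the powers $x_i^{2\eps_i-\eta_i\eps_i}$, gives exactly the displayed formula with $P_{n,\eps,\eta,k,l}(x) = \prod_i P_{n_i,\eps_i,\eta_i,k_i,l_i}(x_i)$ and $(\partial_x F)^k(\partial_x^2F)^l = \prod_i(\partial_{x_i}F)^{k_i}(\partial_{x_i}^2F)^{l_i}$ in the notation fixed in the introduction. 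The factor $y^{2\eps}$ simply rides along untouched since $\delta_x^n$ does not act on $y$. I expect the main obstacle to be purely organizational rather than deep: namely getting the degree bookkeeping and the handling of the indicator $\chi_{\{n\ge\eta\eps\}}$ exactly right when the derivative repeatedly hits the low‑degree prefactor $x_j^{2\eps_j}$ and when a $\partial_{x_j}F$ factor (being affine, not constant) is differentiated — one must be careful that differentiating $\partial_{x_j}F$ contributes a \emph{constant} times lower‑order terms which get absorbed into $P_{n,\eps,\eta,k,l}$ without violating the stated degree $n_i-\eta_i\eps_i-k_i-2l_i$, and that $\partial_{x_j}^2F$ genuinely does not depend on $x_j$ so that no $\partial_{x_j}^3F$ ever appears. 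There is nothing transcendental left after invoking the quadratic nature of $q_\pm$, so once this accounting is set up the induction runs mechanically.
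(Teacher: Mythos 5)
Your proof is correct and follows essentially the same route as the paper: reduce to one dimension and run an induction whose engine is that $F$ is quadratic in each $x_j$, so $\partial_{x_j}F$ is affine in $x_j$, $\partial_{x_j}^2F$ is constant in $x_j$, and Fa\`a di Bruno collapses to the monomials $(\partial_{x_j}F)^{k}(\partial_{x_j}^2F)^{l}$ with $k+2l\le m$; the paper merely organizes the bookkeeping differently, via the identity $\delta_x^n f=\sum_{m\le n}P_{n,m}(x)\partial_x^m f$ combined with Leibniz' rule on $x^2f$, rather than your single closure-under-$\delta_{x_j}$ induction. One minor correction that does not affect validity: $F$ does in fact split as a sum over coordinates (the terms $x_iy_is_i$ in $q_\pm$ are diagonal, not cross-coordinate), so $\ee\b$ factors as a product over $i$, which is exactly how the paper justifies the reduction to $d=1$ --- your black-box treatment of $\partial_{x_j}F$ and $\partial_{x_j}^2F$ reaches the same conclusion.
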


\begin{proof}
By the product structure of the expression $(xy)^{2\eps} \e\b$ it is enough to prove the result
in the one-dimensional case. Thus we assume that $d=1$.

Proceeding inductively it is easy to see that
$$
\delta_x^n f = \sum_{m=0}^n P_{n,m}(x) \partial_x^m f,
$$
where $P_{n,m}$ is a polynomial of degree $n-m$. Further, we observe that by Leibniz' rule
$$
\partial_x^m [x^2 f] = x^2\partial_x^m f + 2\,\chi_{\{m \ge 1\}} m\, x \,\partial_x^{m-1} f
	+ \chi_{\{m \ge 2\}} m(m-1)\, \partial_x^{m-2}f.
$$
Finally, taking into account that $\partial_x^3 F = \partial_x^4 F = \ldots = 0$, we deduce from
\eqref{Faa} that
$$
\partial_x^m \e\b = \partial_x^m \exp(F) = \exp(F) \sum_{\substack{k,l \ge 0 \\ k+2l =m}}
	c_{m,k} (\partial_x F)^k (\partial_x^2 F)^l,
$$
where $c_{m,k} \in \mathbb{R}$ are constants. 

These facts altogether imply that for $\eps = 0$,
$$
\delta_x^n \big[ x^{2\eps} \e\b\big] = \sum_{\substack{k,l \ge 0 \\ k+2l \le n}}
	P_{n,k,l}(x) (\partial_x F)^k (\partial_x^2 F)^l \e\b,
$$
and when $\eps =1$,
\begin{align*}
\delta_x^n \big[ x^{2\eps} \e\b\big] & = \sum_{m=0}^n P_{n,m}(x)
	\sum_{\eta =0,1,2} C_{m,\eta} \chi_{\{m \ge \eta\}}
	x^{2-\eta} \partial_x^{m-\eta} \e\b \\
& = \sum_{\eta = 0,1,2} x^{2-\eta} \sum_{\substack{k,l \ge 0 \\ k+2l \le n - \eta}} \chi_{\{n \ge \eta\}}
	P_{n,\eta,k,l}(x) (\partial_x F)^k (\partial_x^2 F)^l \e\b,
\end{align*}
where $P_{n,k,l}$ and $P_{n,\eta,k,l}$ are polynomials of degrees $n-k-2l$ and $n-\eta-k-2l$, respectively.
Combining together the formulas for $\eps = 0$ and $\eps = 1$ produces
$$
\delta_x^n \big[ x^{2\eps} \e\b\big] 
= \sum_{\eta = 0,1,2} x^{2\eps-\eta\eps}
	\sum_{\substack{k,l \ge 0 \\ k+2l \le n-\eta \eps}} 
	\chi_{\{n \ge \eta\eps\}} P_{n,\eps,\eta,k,l}(x)
	(\partial_x F)^k (\partial_x^2 F)^l \e\b
$$
with $P_{n,\eps,\eta,k,l}$ being a polynomial of degree $n-\eta\eps-k-2l$.
The conclusion follows.
\end{proof}

\begin{lem} \label{lem:dtdx}
Let $d \ge 1$, $\alpha\in (-1,\infty)^d$, $m \in \N\setminus\{ 0 \}$, $n \in \N^d$, $\eps \in \{0,1\}^d$.
Then
\begin{align*}
& \partial^m_t \delta_x^n \bigg[\Big(\frac{1-\zeta^{2}}{\zeta}\Big)^{d+|\alpha|+2|\eps|}
	(xy)^{2\eps} \ee\b\bigg] \\
& \quad =
y^{2\eps} \sum_{\substack{w\in \N^{m} \\ w_1+\ldots+ m w_m=m}} Q_{w}(\z) 
\sum_{\eta \in \{0,1,2\}^d} x^{2\eps-\eta\eps}
	\sum_{\substack{k,l \in \N^d \\ k+2l \le n-\eta \eps}} 
	\chi_{\{n \ge \eta\eps\}} P_{n,\eps,\eta,k,l}(x)\\
&\qquad\times
	\sum_{i=-|l|}^{|l|}\sum_{\substack{v \in \N^d \\ v \le k}}
	\sum_{\substack{j,p,r \in \N \\ j+p+r \le |w|}}
C_{m,w,j,p,r,d,\a,\eps,i,v,k,l}\,
	\big(1-\z^2\big)^{d+|\a|+2|\eps|+|w|-j}\,
\z^{-d-|\a|-2|\eps|+i-|w|+2j}\\
&\qquad\times
\Big(\frac{q_{+}}{\z}\Big)^{p}(\z q_{-})^{r}
\Big(\frac{1}{\z}\Psi_{+}\Big)^{v} 
(\z \Psi_{-})^{k-v} \ee\b,
\end{align*}
where $\zeta = \zeta(t) = \tanh t$, $Q_w$ are polynomials, $C_{m,w,j,p,r,d,\a,\eps,i,v,k,l}$ 
are constants and $P_{n,\eps,\eta,k,l}$ are the polynomials from Lemma \ref{lem:deltax}.
\end{lem}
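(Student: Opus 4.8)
The plan is to reduce to the one-dimensional situation as much as possible, then combine the effect of $\partial_t^m$ with the already-established expansion for $\delta_x^n$ from Lemma \ref{lem:deltax}. First I would apply Lemma \ref{lem:deltax} to rewrite $\delta_x^n[(xy)^{2\eps}\e\b]$ in the stated form, so that the quantity to be differentiated $m$ times in $t$ becomes
$$
\Big(\tfrac{1-\zeta^2}{\zeta}\Big)^{d+|\alpha|+2|\eps|} y^{2\eps} \sum_{\eta}x^{2\eps-\eta\eps}\sum_{k,l}\chi_{\{n\ge\eta\eps\}}P_{n,\eps,\eta,k,l}(x)\,(\partial_x F)^k(\partial_x^2 F)^l\,\e\b,
$$
where now the $x$-polynomials and the $\eta,k,l$ bookkeeping are inert under $\partial_t$. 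So the real content is to compute $\partial_t^m$ of a product of the scalar factor $\big((1-\zeta^2)/\zeta\big)^{d+|\alpha|+2|\eps|}$, the exponential $\e\b$, and powers of the first- and second-order spatial derivatives $\partial_x F$ and $\partial_x^2 F$, all evaluated at $\zeta=\tanh t$.

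The key observations I would assemble are: (i) $\partial_t\zeta = 1-\zeta^2$, so by Fa\`a di Bruno / chain rule any $\partial_t^a$ applied to a smooth function of $\zeta$ produces a sum over partitions $w\in\N^a$, $w_1+2w_2+\cdots+aw_a=a$, with polynomial coefficients $Q_w(\zeta)$ times $(1-\zeta^2)^{|w|}$ times a $\zeta$-derivative of that function of order $|w|$; this explains the outer sum $\sum_{w_1+\ldots+mw_m=m}Q_w(\zeta)$ and the factors $(1-\zeta^2)^{|w|-j}$, $\zeta^{-d-|\alpha|-2|\eps|-|w|+2j}$ once one differentiates the power $((1-\zeta^2)/\zeta)^{d+|\alpha|+2|\eps|}$ in $\zeta$ a total of $|w|$ times (the parameter $j$, $0\le j\le|w|$, records how many derivatives hit $1-\zeta^2$ versus $1/\zeta$). (ii) Writing $\e\b=\exp(F)$ with $F=-\tfrac{1}{4\zeta}q_+-\tfrac{\zeta}{4}q_-$, we have $\partial_\zeta F = \tfrac{1}{4\zeta^2}q_+-\tfrac14 q_-$, and more importantly the mixed quantities $\partial_x F$, $\partial_x^2 F$ (which are, coordinatewise, linear in $\Psi_\pm$ and constant, up to the $\zeta^{\mp1}$ weights) behave under $\partial_\zeta$ in a closed way: $\partial_x F$ is a combination of $\zeta^{-1}\Psi_+$ and $\zeta\Psi_-$, and $\partial_\zeta$ only flips signs and shifts the power of $\zeta$ by $\pm1$. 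Consequently every $\zeta$-differentiation of the whole integrand stays inside the algebra spanned by monomials $(q_+/\zeta)^p(\zeta q_-)^r(\zeta^{-1}\Psi_+)^v(\zeta\Psi_-)^{k-v}\e\b$ with $j+p+r\le|w|$, $v\le k$, and a shift $i$ with $|i|\le|l|$ coming from differentiating the $(\partial_x^2 F)^l$ factor (each $\partial_x^2 F$ contributes a $\zeta^{\pm1}$ that, after $|l|$ such factors and $|w|$ differentiations, produces the index range $-|l|\le i\le|l|$). I would make this precise by an induction on $m$: the inductive step is just Leibniz plus the three elementary differentiation rules above, and one checks the constraint sets are preserved.

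The main obstacle — and where I would spend the bulk of the writing — is the careful bookkeeping of exponents of $\zeta$ and of $1-\zeta^2$, and verifying that the proposed index ranges ($0\le j\le|w|$, $j+p+r\le|w|$, $v\le k$, $-|l|\le i\le|l|$) are exactly what the induction delivers, with all the dependence absorbed into the constants $C_{m,w,j,p,r,d,\alpha,\eps,i,v,k,l}$. Two small points need care: first, that $\partial_t^m$ acting on the polynomial factors $P_{n,\eps,\eta,k,l}(x)$ does nothing (they are $t$-independent), so they pass through untouched, matching the statement; second, that differentiating $\e\b$ in $t$ brings down $\partial_t F=(1-\zeta^2)\partial_\zeta F$, which is why each differentiation can raise $|w|$ by one while contributing a clean power of $1-\zeta^2$ and of $q_\pm/\zeta^{\pm1}$ — tracking this is what pins down the exponent $d+|\alpha|+2|\eps|+|w|-j$ on $1-\zeta^2$ and $-d-|\alpha|-2|\eps|+i-|w|+2j$ on $\zeta$. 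Everything else is routine, so once the induction hypothesis is stated in the displayed form the verification of the inductive step is a direct, if lengthy, computation.
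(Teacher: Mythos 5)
Your proposal follows essentially the same route as the paper: first apply Lemma \ref{lem:deltax}, then use Fa\`a di Bruno for the substitution $\zeta=\tanh t$ (exploiting $\partial_t\zeta=1-\zeta^2$ to produce the $(1-\zeta^2)^{|w|}Q_w(\zeta)$ factors), expand $(\partial_xF)^k(\partial_x^2F)^l$ into monomials in $\zeta^{-1}\Psi_+$, $\zeta\Psi_-$ and powers $\zeta^i$ with $-|l|\le i\le|l|$, and verify by induction that $\partial_\zeta$-differentiation preserves the algebra of monomials $(1-\zeta^2)^{M-j}\zeta^{W-u+2j}(q_+/\zeta)^p(\zeta q_-)^r\ee\b$ with $j+p+r\le u$. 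The only cosmetic difference is that the paper pulls the $\zeta$-independent $\Psi_\pm$ factors out before running the induction on the order of $\zeta$-differentiation, whereas you keep them inside; this changes nothing of substance.
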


\begin{proof}
For the sake of lucidity we denote 
\begin{align*}
& \Upsilon_{\!n}(x,y,\z(t),s)\\
& =
\Big(\frac{1-\zeta^{2}}{\zeta}\Big)^{d+|\alpha|+2|\eps|}
	 \delta_x^n \big[ (xy)^{2\eps} \e\b\big]\\
& =
\Big(\frac{1-\zeta^{2}}{\zeta}\Big)^{d+|\alpha|+2|\eps|}
	y^{2\eps} \sum_{\eta \in \{0,1,2\}^d} x^{2\eps-\eta\eps}
	\sum_{\substack{k,l \in \N^d \\ k+2l \le n-\eta \eps}} 
	\chi_{\{n \ge \eta\eps\}} P_{n,\eps,\eta,k,l}(x)
	(\partial_x F)^k (\partial_x^2 F)^l \e\b,
\end{align*}
where the second identity is a consequence of Lemma \ref{lem:deltax}. Applying Fa\`a di Bruno's 
formula \eqref{Faa} we obtain
$$
\partial_{t}^{m}\Upsilon_{\!n}(x,y,\z(t),s)
=
\sum_{\substack{w\in \N^{m} \\ w_1+\ldots+ m w_m=m}} C_{w} \, \partial_{\z}^{|w|}\Upsilon_{\!n}(x,y,\z,s)
	\Big[ \big( \partial_t^1\z(t)\big)^{w_1}\cdot \ldots \cdot \big( \partial_t^m\z(t)\big)^{w_m} \Big].	
$$

We first analyze the expression in square brackets above. 
By induction it follows that
\begin{align*}
\partial_t^u\z(t)\big|_{t=t(\z)} = (1-\z^2)R_u(\z),\qquad u = 1,2,\ldots,
\end{align*}
where $R_u$ are polynomials. Thus we get
\begin{equation}\label{eq1}
\big( \partial_t^1\z(t)\big)^{w_1}\cdot \ldots \cdot \big( \partial_t^m\z(t)\big)^{w_m}
= (1-\z^2)^{|w|}\,Q_{w}(\z),
\end{equation}
where $Q_w$ are polynomials. 

Next we deal with $\partial_{\z}^{u}\Upsilon_{\!n}(x,y,\z,s)$ for $u \in \N$.
Proceeding inductively one checks that for any $M,W\in \mathbb{R}$
\begin{align}\nonumber
& \partial_{\z}^{u}\big[(1-\z^2)^M \z^W \e\b \big]\\\label{eq2}
& \quad =
	\sum_{\substack{j,p,r \in \N \\ j+p+r \le u}}
	C_{u,j,p,r,M,W} (1-\z^2)^{M-j} \z^{W-u+2j}
	\Big(\frac{q_{+}}{\z}\Big)^{p}(\z q_{-})^{r}
	\e\b,
\end{align}
where $C_{u,j,p,r,M,W}\in \mathbb{R}$ are constants. Furthermore, since
$$
\partial_{x_j}F=-\frac{1}{2\z}\Psi_{+}^{j}-\frac{\z}{2}\Psi_{-}^{j},\qquad
\partial_{x_j}^{2}F=-\frac{1}{2\z}-\frac{\z}{2},\qquad \quad j=1,\ldots,d,
$$
by means of Newton's formula we infer that
\begin{align*}
& \Big(\frac{1-\zeta^{2}}{\zeta}\Big)^{d+|\alpha|+2|\eps|}
	(\partial_x F)^k (\partial_x^2 F)^l\\
&\quad =
(1-\z^2)^{d+|\a|+2|\eps|}\sum_{i=-|l|}^{|l|}
	\sum_{\substack{v \in \N^d \\ v \le k}} C_{i,v,k,l}\, \z^{-d-|\a|-2|\eps|+i}
	\Big(\frac{1}{\z}\Psi_{+}\Big)^{v} 
	(\z \Psi_{-})^{k-v},
\end{align*}
where $C_{i,v,k,l} \in \mathbb{R}$ are constants.
Then using \eqref{eq2} specified to $M=d+|\a|+2|\eps|$, $W=-d-|\a|-2|\eps|+i-|v|+|k-v|$ and $u=|w|$ produces
\begin{align*}
& \partial_{\z}^{|w|}\bigg[\Big(\frac{1-\zeta^{2}}{\zeta}\Big)^{d+|\alpha|+2|\eps|}
	(\partial_x F)^k (\partial_x^2 F)^l \e\b\bigg]\\
& \quad =
\sum_{i=-|l|}^{|l|}
	\sum_{\substack{v \in \N^d \\ v \le k}}
	\sum_{\substack{j,p,r \in \N \\ j+p+r \le |w|}}
C_{w,j,p,r,d,\a,\eps,i,v,k,l}\,(1-\z^2)^{d+|\a|+2|\eps|-j}
\z^{-d-|\a|-2|\eps|+i-|w|+2j}\\
&\qquad\times
\Big(\frac{q_{+}}{\z}\Big)^{p}(\z q_{-})^{r} \Big(\frac{1}{\z}\Psi_{+}\Big)^{v} (\z \Psi_{-})^{k-v} \e\b,
\end{align*}
where $C_{w,j,p,r,d,\a,\eps,i,v,k,l}\in \mathbb{R}$ are constants.

Combining the last identity with \eqref{eq1} leads to the asserted formula.
\end{proof}

\begin{lem} \label{lem:EST}
Let $d \ge 1$, $\alpha \in (-1,\infty)^d$, $n \in \N^d$, $m \in \N$, $\eps \in \{0,1\}^d$. Then
\begin{align}\nonumber
& \bigg| \partial^m_t \delta_x^n \bigg[\Big(\frac{1-\zeta^{2}}{\zeta}\Big)^{d+|\alpha|+2|\eps|}
	(xy)^{2\eps} \ee\b\bigg]\bigg| \\ \label{EST1}
& \quad \lesssim (1-\zeta^2)^{d+|\alpha|+2|\eps|} \; y^{2\eps} \sum_{\eta \in \{0,1,2\}^d}
	x^{2\eps-\eta\eps} \zeta^{-d-|\alpha|-2|\eps|-m-|n|\slash 2+|\eta\eps|\slash 2}
	\sqrt{\ee\b}
\end{align}
and
\begin{align}\nonumber
& \bigg| \nabla_{\! x,y}\,\partial^m_t \delta_x^n
	 \bigg[\Big(\frac{1-\zeta^{2}}{\zeta}\Big)^{d+|\alpha|+2|\eps|}
	(xy)^{2\eps} \ee\b\bigg]\bigg| \\ \label{EST2}
& \quad \lesssim (1-\zeta^2)^{d+|\alpha|+2|\eps|} \Bigg\{ y^{2\eps} \sum_{\eta \in \{0,1,2\}^d}
	x^{2\eps-\eta\eps} \zeta^{-d-|\alpha|-2|\eps|-m-|n|\slash 2+|\eta\eps|\slash 2-1\slash 2}
	\big(\ee\b \big)^{1 \slash 4}\\\nonumber
& \qquad + 
	\sum_{j=1}^{d} \chi_{\{\eps_j=1\}} y^{2\eps-e_j} 
	\sum_{\eta \in \{0,1,2\}^d} x^{2\eps-\eta\eps} 
	\zeta^{-d-|\alpha|-2|\eps|-m-|n|\slash 2+|\eta\eps|\slash 2} 
	\big( \ee\b \big)^{1 \slash 4} \Bigg\},
\end{align}
uniformly in $\zeta \in (0,1)$, $s\in[-1,1]^d$ and $x,y \in \R$; here $\zeta=\zeta(t)=\tanh t$.
\end{lem}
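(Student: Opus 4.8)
The plan is to reduce everything to the explicit expansion of Lemma~\ref{lem:dtdx}. For $m\ge1$ that lemma writes $\partial^m_t\delta_x^n\big[\big(\tfrac{1-\zeta^2}{\zeta}\big)^{d+|\alpha|+2|\eps|}(xy)^{2\eps}\e\b\big]$ as a finite sum of constant multiples of
\begin{equation*}
y^{2\eps}x^{2\eps-\eta\eps}\,Q_w(\zeta)P_{n,\eps,\eta,k,l}(x)\,(1-\zeta^2)^{d+|\alpha|+2|\eps|+|w|-j}\,\zeta^{-d-|\alpha|-2|\eps|+i-|w|+2j}\Big(\tfrac{q_+}{\zeta}\Big)^{p}(\zeta q_-)^{r}\Big(\tfrac{1}{\zeta}\Psi_+\Big)^{v}(\zeta\Psi_-)^{k-v}\,\e\b,
\end{equation*}
the indices ranging over the finite sets indicated there, in particular with $n\ge\eta\eps$ and $k+2l\le n-\eta\eps$; for $m=0$ one gets a sum of the same shape (the $w$-sum absent) from Lemma~\ref{lem:deltax} and the formula for $\big(\tfrac{1-\zeta^2}{\zeta}\big)^{d+|\alpha|+2|\eps|}(\partial_xF)^k(\partial_x^2F)^l$ derived inside the proof of Lemma~\ref{lem:dtdx}. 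Thus it suffices to dominate one such generic summand by the right-hand side of \eqref{EST1}. Here $|Q_w(\zeta)|\lesssim1$, $(1-\zeta^2)^{|w|-j}\lesssim1$ (since $j\le|w|$), while $y^{2\eps}$ and $x^{2\eps-\eta\eps}$ (a non-negative power, as $\eta\le2$) are carried along. I would control the rest using the elementary bounds $|\Psi_\pm^\mu|\le\sqrt{q_\pm}$ and $x_\mu\le\tfrac12(\sqrt{q_+}+\sqrt{q_-})$, $\mu=1,\dots,d$ — immediate because each of the $d$ summands of $q_\pm$ is non-negative and dominates $(\Psi_\pm^\mu)^2$ — together with $P_{n,\eps,\eta,k,l}$ being, by Lemma~\ref{lem:deltax}, a product of one-dimensional polynomials of degrees $n_\mu-\eta_\mu\eps_\mu-k_\mu-2l_\mu$; this lets me bound $|P_{n,\eps,\eta,k,l}(x)\,\Psi_+^{v}\Psi_-^{k-v}|$ by a finite sum of products $q_+^{A}q_-^{B}$ with $A\ge\tfrac{|v|}{2}$, $B\ge0$ and $A+B\le\tfrac12(|n|-|\eta\eps|-2|l|)$. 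Finally the decay estimates $q_+^{A}q_-^{B}\,\e\b\lesssim\zeta^{A-B}\sqrt{\e\b}$ and $q_+^{A}q_-^{B}\sqrt{\e\b}\lesssim\zeta^{A-B}(\e\b)^{1/4}$, valid for $A,B\ge0$ and consequences of $\sup_{u,v\ge0}u^{A}v^{B}e^{-(u+v)/8}<\infty$, absorb the powers of $q_\pm$ into $\e\b$ at the price of further powers of $\zeta$.

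Carrying this out, the generic summand is dominated by $(1-\zeta^2)^{d+|\alpha|+2|\eps|}\,y^{2\eps}x^{2\eps-\eta\eps}\,\zeta^{\Gamma}\sqrt{\e\b}$ with $\Gamma=-d-|\alpha|-2|\eps|+i-|w|+2j-2|v|+|k|+A-B$, and the heart of the matter is the inequality
\begin{equation*}
\Gamma\;\ge\;-d-|\alpha|-2|\eps|-m-\tfrac{|n|}{2}+\tfrac{|\eta\eps|}{2},
\end{equation*}
which I would deduce from $i\ge-|l|$, $|w|\le m$, $j\ge0$, $|v|\le|k|$, the properties of $A,B$ just recorded, and $|k|+2|l|\le|n|-|\eta\eps|$ (coming from $k+2l\le n-\eta\eps$) — all the slack from the degree of $P_{n,\eps,\eta,k,l}$ and from the admissible ranges of the indices being consumed exactly. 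As $\zeta\in(0,1)$ this yields $\zeta^{\Gamma}\le\zeta^{-d-|\alpha|-2|\eps|-m-|n|/2+|\eta\eps|/2}$, and summing the finitely many summands and over $\eta\in\{0,1,2\}^d$ gives \eqref{EST1}.

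For \eqref{EST2} I would estimate each entry of $\nabla_{\!x,y}$ in turn. For $\partial_{x_j}$, writing $\partial_{x_j}=\delta_{x_j}-x_j$ (multiplication by $x_j$) and using $\delta_{x_j}\delta_x^n=\delta_x^{n+e_j}$ together with the fact that $\delta_{x_j}$ commutes with $\partial^m_t$, one gets $\partial_{x_j}\partial^m_t\delta_x^n[\,\cdot\,]=\partial^m_t\delta_x^{n+e_j}[\,\cdot\,]-x_j\,\partial^m_t\delta_x^n[\,\cdot\,]$; applying \eqref{EST1} to both terms — for the second one also using $x_j\sqrt{\e\b}\le\tfrac12(\sqrt{q_+}+\sqrt{q_-})\sqrt{\e\b}\lesssim\zeta^{-1/2}(\e\b)^{1/4}$, and throughout $\sqrt{\e\b}\le(\e\b)^{1/4}$ (as $\e\b\le1$) — bounds the $\partial_{x_j}$-entry by the first term on the right of \eqref{EST2}. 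For $\partial_{y_j}$, which commutes with $\partial^m_t$ and $\delta_x^n$, the product rule gives $\partial_{y_j}\big[\big(\tfrac{1-\zeta^2}{\zeta}\big)^{d+|\alpha|+2|\eps|}(xy)^{2\eps}\e\b\big]=\big(\tfrac{1-\zeta^2}{\zeta}\big)^{d+|\alpha|+2|\eps|}\big[\chi_{\{\eps_j=1\}}\tfrac{2}{y_j}(xy)^{2\eps}\e\b+(xy)^{2\eps}\big(-\tfrac{1}{2\zeta}\Phi_+^j-\tfrac{\zeta}{2}\Phi_-^j\big)\e\b\big]$; for the first summand, pulling out the $t,x$-independent factor $\tfrac{2}{y_j}$ and applying \eqref{EST1} (with $y^{2\eps}/y_j=y^{2\eps-e_j}$, legitimate where $\chi_{\{\eps_j=1\}}\neq0$) gives the second term on the right of \eqref{EST2}. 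For the second summand I would use that $\Phi_\pm^j=y_j\pm x_js_j$ is affine in $x_j$ with $\partial_{x_j}\Phi_\pm^j=\pm s_j$, so $\delta_x^n\big[(xy)^{2\eps}\Phi_\pm^j\e\b\big]=\Phi_\pm^j\,\delta_x^n\big[(xy)^{2\eps}\e\b\big]\pm n_js_j\,\delta_x^{n-e_j}\big[(xy)^{2\eps}\e\b\big]$; this exhibits $\partial^m_t\delta_x^n\big[\big(\tfrac{1-\zeta^2}{\zeta}\big)^{d+|\alpha|+2|\eps|}(xy)^{2\eps}\big(-\tfrac{1}{2\zeta}\Phi_+^j-\tfrac{\zeta}{2}\Phi_-^j\big)\e\b\big]$ as a combination, with coefficients $\lesssim1$, of the quantities $\Phi_\pm^j\,\partial^m_t\delta_x^n\big[\zeta^{\mp1}\big(\tfrac{1-\zeta^2}{\zeta}\big)^{d+|\alpha|+2|\eps|}(xy)^{2\eps}\e\b\big]$ and $\partial^m_t\delta_x^{n-e_j}\big[\zeta^{\mp1}\big(\tfrac{1-\zeta^2}{\zeta}\big)^{d+|\alpha|+2|\eps|}(xy)^{2\eps}\e\b\big]$. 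The argument for \eqref{EST1} applies verbatim with the extra $t,x$-independent factor $\zeta^{\pm1}$ (which merely shifts $\Gamma$ by $\pm1$), and combining this with $|\Phi_+^j|\sqrt{\e\b}\le\sqrt{q_+}\sqrt{\e\b}\lesssim\zeta^{1/2}(\e\b)^{1/4}$, $|\Phi_-^j|\sqrt{\e\b}\le\sqrt{q_-}\sqrt{\e\b}\lesssim\zeta^{-1/2}(\e\b)^{1/4}$, $|n-e_j|=|n|-1$, and $\zeta\in(0,1)$, each of these quantities is dominated by the first term on the right of \eqref{EST2}. Summing over $j$ finishes the proof.

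I expect the bookkeeping of the powers of $\zeta$ in \eqref{EST1} to be the main obstacle: one must check that the possibly large degree of the polynomial $P_{n,\eps,\eta,k,l}$ is compensated \emph{precisely} — with no room to spare and no deficit — by favourable powers of $\zeta$ produced from the decay of $\e\b$. Once \eqref{EST1} is available, \eqref{EST2} follows from bounded modifications of the same argument, the only genuine point being that differentiating $\e\b$ in a $y$-variable, unlike in an $x$-variable, is not absorbed by the $\delta_x$-calculus and costs exactly one extra half-power of $\zeta^{-1}$.
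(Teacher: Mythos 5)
Your proposal is correct and follows essentially the same route as the paper: both arguments start from the explicit expansion of Lemma \ref{lem:dtdx} (resp.\ Lemma \ref{lem:deltax} for $m=0$), absorb the polynomial and $\Psi_{\pm}$, $q_{\pm}$, $x$ factors into fractional powers of $\ee\b$ at the cost of powers of $\zeta$ exactly as in Lemma \ref{lem:comp}, and treat the $x$-gradient via $\partial_{x_j}=\delta_{x_j}-x_j$. The only organizational difference is in the $y$-gradient, where the paper differentiates the full formula of Lemma \ref{lem:dtdx} in $y_j$ and estimates the resulting expression term by term, while you commute $\partial_{y_j}$ past $\partial_t^m\delta_x^n$ first and dispose of the $x_j$-dependence of $\Phi_{\pm}^j$ through the identity $\delta_{x_j}^{n_j}[\Phi_{\pm}^j g]=\Phi_{\pm}^j\delta_{x_j}^{n_j}g\pm n_js_j\delta_{x_j}^{n_j-1}g$ --- a slightly cleaner bookkeeping of the same estimates (modulo a harmless sign slip in ``shifts $\Gamma$ by $\pm1$'', which should read $\mp1$, and the fact that the factor $\zeta^{\mp1}$ is of course $t$-dependent but is correctly absorbed by shifting $W$ in the $\partial_\zeta^u$ formula).
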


To prove the lemma, we first state some simple auxiliary estimates.
The following is a compilation of \cite[Lemma 4.1, Lemma 4.2]{Sz} and 
\cite[Corollary 5.2, Lemma 5.5 (a)]{NS1}. 
\begin{lem}\label{lem:comp}
Let $b\geq 0$ and $c>0$ be fixed. Then for any $j=1,\ldots,d,$ we have
\begin{align*}
& \emph{(a)} \qquad
	|\Psi_{\pm}^{j}| \leq \sqrt{q_{\pm}} \qquad \textrm{and} \qquad
	|\Phi_{\pm}^{j}| \leq \sqrt{q_{\pm}},\\
& \emph{(b)} \qquad
	\big( A q_{\pm} \big)^{b} \exp \big( -cAq_{\pm} \big) \lesssim 1,\\
& \emph{(c)} \qquad
	\big( |\Psi_{+}^{j}| + |\Phi_{+}^{j}| \big)^{b} \big( \ee\b \big)^{c} 
	\lesssim 
	\z^{b\slash 2},\\
& \emph{(d)} \qquad
	\big( |\Psi_{-}^{j}| + |\Phi_{-}^{j}| \big)^{b} \big( \ee\b \big)^{c} 
	\lesssim 
	\z^{-b\slash 2},\\
& \emph{(e)} \qquad
	(x_j)^b \big( \ee\b \big)^{c} 
		\lesssim 
	\z^{-b\slash 2},
\end{align*}
uniformly in $x,y \in \R$ and $s \in [-1,1]^d$, and also in $A>0$ if (b) is considered,
and in $\z \in (0,1)$ when items (c)-(e) are taken into account.
\end{lem}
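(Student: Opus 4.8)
The plan is to prove the five bounds in sequence, deriving the Gaussian-type estimates (c)--(e) from the two elementary facts (a) and (b).

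First I would settle the purely algebraic inequality (a). Writing $q_{\pm}=\sum_{i=1}^{d}\big(x_i^2+y_i^2\pm 2x_iy_is_i\big)$ and using $|s_i|\le 1$, every summand satisfies $x_i^2+y_i^2\pm 2x_iy_is_i\ge (x_i-y_i)^2\ge 0$, so discarding the terms with $i\neq j$ leaves $q_{\pm}\ge x_j^2\pm 2x_jy_js_j+y_j^2$. On the other hand $(\Psi_{\pm}^{j})^2=x_j^2\pm 2x_jy_js_j+y_j^2s_j^2\le x_j^2\pm 2x_jy_js_j+y_j^2$ since $s_j^2\le 1$; combining the two yields $(\Psi_{\pm}^{j})^2\le q_{\pm}$, and the bound for $\Phi_{\pm}^{j}$ follows by the symmetry $x\leftrightarrow y$, which leaves $q_{\pm}$ invariant. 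For (b) I would simply observe that the one-variable function $t\mapsto t^{b}e^{-ct}$ is continuous on $[0,\infty)$ and vanishes at infinity when $b\ge 0$ and $c>0$, hence is bounded by a constant depending only on $b,c$; specializing $t=Aq_{\pm}\ge 0$ gives (b) uniformly in $A>0$ and in all remaining variables.

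Next, (c) and (d) come from feeding (a) and (b) into the explicit identity $(\e\b)^{c}=\exp\!\big(-\frac{c}{4\z}q_{+}-\frac{c\z}{4}q_{-}\big)$. For (c), part (a) gives $|\Psi_{+}^{j}|+|\Phi_{+}^{j}|\le 2\sqrt{q_{+}}$, and after dropping the harmless factor $\exp(-\frac{c\z}{4}q_{-})\le 1$ the task reduces to bounding $q_{+}^{b/2}\exp(-\frac{c}{4\z}q_{+})=\z^{b/2}\,(q_{+}/\z)^{b/2}\exp\!\big(-\frac{c}{4}\,(q_{+}/\z)\big)$; here (b) applied with $A=1/\z$ and exponents $b/2$, $c/4$ controls the last two factors by a constant, leaving exactly $\z^{b/2}$. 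Part (d) is the mirror argument: $|\Psi_{-}^{j}|+|\Phi_{-}^{j}|\le 2\sqrt{q_{-}}$, one keeps the factor $\exp(-\frac{c\z}{4}q_{-})$, writes $q_{-}^{b/2}\exp(-\frac{c\z}{4}q_{-})=\z^{-b/2}\,(\z q_{-})^{b/2}\exp\!\big(-\frac{c}{4}\,\z q_{-}\big)$, and invokes (b) with $A=\z$.

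Finally, (e) reduces to (c)--(d) through the identity $2x_j=\Psi_{+}^{j}+\Psi_{-}^{j}$, which gives $x_j^{b}\lesssim |\Psi_{+}^{j}|^{b}+|\Psi_{-}^{j}|^{b}$ for $b\ge 0$. Estimating $|\Psi_{+}^{j}|^{b}(\e\b)^{c}\le(|\Psi_{+}^{j}|+|\Phi_{+}^{j}|)^{b}(\e\b)^{c}\lesssim \z^{b/2}$ by (c) and $|\Psi_{-}^{j}|^{b}(\e\b)^{c}\lesssim \z^{-b/2}$ by (d), and using $\z^{b/2}\le \z^{-b/2}$ on $(0,1)$, gives (e). All computations are elementary; the only delicate points are the sign bookkeeping in (a), namely making sure each coordinate block of $q_{\pm}$ is nonnegative so that the off-diagonal terms may be dropped, and the precise splitting of the power of $\z$ in (c)--(d) so that the surviving factor matches the hypothesis of (b) exactly.
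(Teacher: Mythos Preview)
Your proof is correct. The paper does not actually prove this lemma; it merely cites \cite[Lemma~4.1, Lemma~4.2]{Sz} and \cite[Corollary~5.2, Lemma~5.5(a)]{NS1}, so your self-contained argument supplies what the paper omits, and the elementary route you take (algebra for (a), boundedness of $t\mapsto t^{b}e^{-ct}$ for (b), then feeding (a) into (b) via the decomposition of $\e\b$ for (c)--(d), and the identity $2x_j=\Psi_{+}^{j}+\Psi_{-}^{j}$ for (e)) is exactly the kind of direct verification one would expect from those references.
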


\begin{proof}[Proof of Lemma \ref{lem:EST}]
We will verify \eqref{EST1} and \eqref{EST2} for $m>0$.
Analogous arguments combined with Lemma \ref{lem:deltax} rather than Lemma \ref{lem:dtdx}
in the reasoning below justify the case $m=0$.

The proof is based on the explicit formula established in Lemma \ref{lem:dtdx}.
In what follows we use the notation of that lemma without further comments. We first show \eqref{EST1}.
Using Lemma \ref{lem:comp} (b)-(e) and the inequality $\z < 1$, we see that
\begin{align*}
|P_{n,\eps,\eta,k,l} (x)| \big( \e\b \big)^{1 \slash 6} 
& \lesssim \z^{-|n| \slash 2 + |\eta\eps| \slash 2 +|k| \slash 2 +|l|},\\
(1-\z^2)^{|w|-j} & \le 1, \qquad 0 \le j \le |w|\\
\z^{i-|w|+2j} &\le \z^{-|l|-m},\qquad 0 \le j \le |w| \le m, \quad -|l| \le i \le |l|,\\
\Big(\frac{q_{+}}{\z}\Big)^{p} (\z q_{-})^{r} \big( \e\b \big)^{1 \slash 6} 
& \lesssim 1, \qquad 0 \le p,r \le |w|,\\
\Big| \Big(\frac{1}{\z}\Psi_{+}\Big)^{v} 
	(\z \Psi_{-})^{k-v} \Big| \big( \e\b \big)^{1 \slash 6} 
& \lesssim \z^{-|k|\slash 2}, \qquad v \le k \le n,
\end{align*}
where the relations $\lesssim$ hold uniformly in $\z \in (0,1)$, $x,y\in \R$ and $s\in [-1,1]^d$.
Combining Lemma \ref{lem:dtdx} with these estimates and using the fact that the polynomials $Q_{w}$ 
are bounded on $(0,1)$ leads directly to the desired conclusion.

It remains to prove \eqref{EST2}. We have
\begin{align*}
& \bigg| \nabla_{\! x,y}\,\partial^m_t \delta_x^n
	 \bigg[\Big(\frac{1-\zeta^{2}}{\zeta}\Big)^{d+|\alpha|+2|\eps|}
	(xy)^{2\eps} \e\b\bigg]\bigg|\\
& \quad \le 
 \bigg| \nabla_{\! x}\,\partial^m_t \delta_x^n
	 \bigg[\Big(\frac{1-\zeta^{2}}{\zeta}\Big)^{d+|\alpha|+2|\eps|}
	(xy)^{2\eps} \e\b\bigg]\bigg| \\
& \qquad +
 \bigg| \nabla_{\! y}\,\partial^m_t \delta_x^n
	 \bigg[\Big(\frac{1-\zeta^{2}}{\zeta}\Big)^{d+|\alpha|+2|\eps|}
	(xy)^{2\eps} \e\b\bigg]\bigg|
\equiv H_x +H_y.
\end{align*}
We will analyze $H_x$ and $H_y$ separately. 
Treatment of $H_x$ is straightforward. We observe that
$\partial_{x_j}=\delta_{x_j} - x_j$ and since $\partial_{t}^m$ commutes with $\delta_{x_j}$ we get
$$
\partial_{x_j} \partial_{t}^m \delta_{x}^{n} = \partial_{t}^m \delta_{x}^{n+e_j} 
	 - x_j \partial_{t}^m \delta_{x}^{n}, \qquad j=1,\ldots,d.
$$
Thus the required estimate of $H_x$
follows easily from \eqref{EST1} and Lemma \ref{lem:comp} (e) applied with $b=1$ and $c=1 \slash 4$.

To deal with $H_y$, we first differentiate in $y_j$ the formula from Lemma \ref{lem:dtdx}. The result is
\begin{align*}
& \partial_{y_j} \partial^m_t \delta_x^n \bigg[\Big(\frac{1-\zeta^{2}}{\zeta}\Big)^{d+|\alpha|+2|\eps|}
	(xy)^{2\eps} \e\b\bigg] \\
& =
\chi_{\{ \eps_j = 1 \}} 2 y^{2\eps - e_j } \sum_{\substack{w\in \N^{m} \\ 
	w_1+\ldots+ m w_m=m}} Q_{w}(\z) \sum_{\eta \in \{0,1,2\}^d} x^{2\eps-\eta\eps}
	\sum_{\substack{k,l \in \N^d \\ k+2l \le n-\eta \eps}} 
	\chi_{\{n \ge \eta\eps\}} P_{n,\eps,\eta,k,l}(x)\\
&\qquad\times
	\sum_{i=-|l|}^{|l|}\sum_{\substack{v \in \N^d \\ v \le k}}
	\sum_{\substack{j,p,r \in \N \\ j+p+r \le |w|}}
C_{m,w,j,p,r,d,\a,\eps,i,v,k,l}\,
	\big(1-\z^2\big)^{d+|\a|+2|\eps|+|w|-j}\,
	\z^{-d-|\a|-2|\eps|+i-|w|+2j}\\
& \qquad \times
\Big(\frac{q_{+}}{\z}\Big)^{p}(\z q_{-})^{r}
\Big(\frac{1}{\z}\Psi_{+}\Big)^{v} 
(\z \Psi_{-})^{k-v} \e\b \\
& \quad +
y^{2\eps} \sum_{\substack{w\in \N^{m} \\ 
	w_1+\ldots+ m w_m=m}} Q_{w}(\z) \sum_{\eta \in \{0,1,2\}^d} x^{2\eps-\eta\eps}
	\sum_{\substack{k,l \in \N^d \\ k+2l \le n-\eta \eps}} 
	\chi_{\{n \ge \eta\eps\}} P_{n,\eps,\eta,k,l}(x)\\
& \qquad \times
	\sum_{i=-|l|}^{|l|}\sum_{\substack{v \in \N^d \\ v \le k}}
	\sum_{\substack{j,p,r \in \N \\ j+p+r \le |w|}}
C_{m,w,j,p,r,d,\a,\eps,i,v,k,l}\,
	\big(1-\z^2\big)^{d+|\a|+2|\eps|+|w|-j}\,
	\z^{-d-|\a|-2|\eps|+i-|w|+2j}\\
&\qquad\times
\Bigg\{ 2\bigg[
	p \Big( \frac{q_{+}}{\z} \Big)^{p-1} 
\frac{ \Phi_{+}^j }{\z}
	(\z q_{-})^{r}
+
	 r\Big( \frac{q_{+}}{\z} \Big)^{p} 
	 (\z q_{-})^{r-1} ( \z \Phi_{-}^{j} ) \bigg]
	\Big(\frac{1}{\z}\Psi_{+}\Big)^{v} 
	(\z \Psi_{-})^{k-v} 
	\\
& \qquad \quad + 
	\Big( \frac{q_{+}}{\z} \Big)^{p} 
	(\z q_{-})^{r} \bigg[
	v_j s_j \z^{-1}	
	\Big(\frac{1}{\z}\Psi_{+}\Big)^{v-e_j} 
	(\z \Psi_{-})^{k-v} 
	- ( k_j - v_j ) s_j \z 
	\Big(\frac{1}{\z}\Psi_{+}\Big)^{v} 
	(\z \Psi_{-})^{k-v-e_j} 
	\bigg]
	\\
& \qquad \quad +
	\Big( \frac{q_{+}}{\z} \Big)^{p} 
	(\z q_{-})^{r}
	\Big(\frac{1}{\z}\Psi_{+}\Big)^{v} 
	(\z \Psi_{-})^{k-v} 
	\Big( -\frac{1}{2 \z } \Phi_{+}^{j} - \frac{\z}{2} \Phi_{-}^{j} \Big)
\Bigg\} \e\b.
\end{align*}
Proceeding in a similar way as in the proof of \eqref{EST1}, 
this time using also the fact that $|s_j| \le 1$, $j=1,\ldots,d$, and the estimates
\begin{align*}
\Big| \frac{\Phi_{+}^{j}}{\z} \Big| \big( \e\b \big)^{1 \slash 4} \lesssim \z^{-1\slash 2}, \quad
	|\z \Phi^j_{-}|  \big( \e\b \big)^{1 \slash 4} \lesssim \z^{1 \slash 2} \le 
\z^{-1 \slash 2}, \qquad  j=1,\ldots,d,
\end{align*}
which follow from (c) and (d) of Lemma \ref{lem:comp}, we arrive at the desired bound for $H_y$.
\end{proof}

\begin{lem}\label{lem:genStTo}
Let $a>1$, $b>0$ and $M \in \mathbb{R}$ be fixed. Then
\begin{align*}
	\int_{0}^{1} \big( \loo(\z) \big)^{M} (1-\z^2)^{b-1} \, \z^{-a-M} \exp(-T \z^{-1} ) \,
	d\z \lesssim T^{-a+1}, \qquad T>0.
\end{align*}
\end{lem}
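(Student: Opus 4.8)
The plan is to estimate the integral by splitting the range of $\z$ at a natural threshold determined by $T$, so that on each piece one factor can be absorbed and the remaining integral computed elementarily. First I would observe that near $\z=1$ the factor $(1-\z^2)^{b-1}$ is harmless when $b\ge 1$ (it is bounded) and only mildly singular when $0<b<1$ (it is integrable), while $\loo(\z)=\log\frac{1+\z}{1-\z}$ blows up only logarithmically there; since $\z^{-a-M}$ stays bounded away from $0$ near $\z=1$ and the exponential $\exp(-T\z^{-1})$ is at most $1$, the contribution of the region $\z\in(1/2,1)$ is $\lesssim 1$, and for $T\ge 1$ this is $\lesssim T^{-a+1}$ because $a>1$; for $T\le 1$ one handles it together with the small-$\z$ part below, or simply notes $T^{-a+1}\gtrsim 1$ fails — so in fact one must be a little careful and treat $T\le 1$ and $T\ge 1$ separately from the start.

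For $T\ge 1$: on $\z\in(0,1/2)$ I would bound $(1-\z^2)^{b-1}\lesssim 1$ and control the logarithmic factor by noting $\loo(\z)\simeq \z$ as $\z\to 0^+$, hence $(\loo(\z))^M \lesssim \z^{M}$ for $\z$ small when $M\ge 0$, and $(\loo(\z))^M\lesssim \z^{M}(\log\frac1\z)^{|M|}$-type bounds — actually cleaner: for any $M\in\mathbb R$, $(\loo(\z))^M \z^{-M}$ is bounded on $(0,1/2)$ only when $M\ge 0$; for $M<0$ it is $(\loo(\z)/\z)^M$ which is bounded since $\loo(\z)/\z\to 2$. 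So in all cases $(\loo(\z))^M\z^{-M}\lesssim 1$ on $(0,1/2)$, reducing the integral to $\lesssim \int_0^{1/2}\z^{-a}\exp(-T\z^{-1})\,d\z$. Substituting $u=\z^{-1}$ (so $d\z=-u^{-2}du$) turns this into $\int_2^\infty u^{a-2}e^{-Tu}\,du$, which by the standard incomplete-Gamma bound is $\lesssim T^{-(a-1)}$ (the main mass is near $u\asymp 1/T\le 1/2$... wait, for $T\ge 1$ the mass is at $u\le 1$, but the range starts at $u=2$, so actually $\int_2^\infty u^{a-2}e^{-Tu}du \le e^{-T}\int_2^\infty u^{a-2}e^{-(T-?)u}$... ). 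Let me instead not split at $1/2$ but at $\z=T$ when $T<1$ and keep the whole interval when $T\ge1$: for $T\ge 1$, $\int_0^1 u$-substituted gives $\int_1^\infty u^{a-2}e^{-Tu}du\le \int_0^\infty u^{a-2}e^{-Tu}du=\Gamma(a-1)T^{-(a-1)}$, using $a>1$, and the logarithmic and $(1-\z^2)$ factors are handled as above (the $(1-\z^2)^{b-1}$ near $\z=1$, i.e. $u$ near $1$, is integrable against $e^{-Tu}$).

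For $0<T<1$: here the exponential provides no decay over most of the range, so I split at $\z_0 = T$ (equivalently $u_0=1/T$). On $\z\in(0,T)$ use $\exp(-T\z^{-1})\le 1$ and integrate $\z^{-a-M}(\loo(\z))^M(1-\z^2)^{b-1}\lesssim \z^{-a}$ (same reduction of the log factor, and $(1-\z^2)^{b-1}\lesssim1$ since $\z<T<1$); then $\int_0^T\z^{-a}d\z$ — but $a>1$ makes this divergent at $0$! So the exponential cannot be dropped near $\z=0$. The correct split is therefore to always use the substitution $u=1/\z$ and write the integral as $\int_1^\infty u^{a-2}(\loo(1/u))^M u^{-M}(1-u^{-2})^{b-1} e^{-Tu}\,du$; the problematic endpoint is $u\to\infty$ ($\z\to0$), where the integrand is $\sim u^{a-2}e^{-Tu}$ (log and $(1-u^{-2})$ factors bounded), and $\int_1^\infty u^{a-2}e^{-Tu}\,du\le \Gamma(a-1)\max(1,?)T^{-(a-1)}$ uniformly in $T>0$ by the substitution $Tu=v$: $=T^{-(a-1)}\int_T^\infty v^{a-2}e^{-v}\,dv\le T^{-(a-1)}\Gamma(a-1)$. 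The only remaining subtlety is the factor $(1-u^{-2})^{b-1}$ near $u=1$ (i.e. $\z=1$): when $0<b<1$ this is $\simeq (u-1)^{b-1}$, integrable on $(1,2)$, and on that bounded piece $e^{-Tu}\le 1$ and $u^{a-2}$, $(\loo(1/u))^M u^{-M}$ are bounded, contributing $\lesssim 1\lesssim T^{-(a-1)}$ for $T\le 1$ and, for $T\ge1$, $\lesssim \int_1^2 (u-1)^{b-1}e^{-Tu}du\lesssim e^{-T}\lesssim T^{-(a-1)}$.

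So the clean write-up is: substitute $u=1/\z$; bound the bounded-variation factors (the logarithm ratio $\loo(1/u)/\,$appropriate power, and $(1-u^{-2})^{b-1}$ away from $u=1$) by constants; dispose of the neighbourhood of $u=1$ separately using integrability of $(u-1)^{b-1}$; and for the tail invoke $\int_1^\infty u^{a-2}e^{-Tu}\,du = T^{-(a-1)}\int_T^\infty v^{a-2}e^{-v}\,dv \le \Gamma(a-1)\,T^{-(a-1)}$, valid because $a-1>0$. The main obstacle — and the reason $a>1$ is hypothesized — is precisely that without the exponential the integral diverges at $\z=0$; one must keep $\exp(-T\z^{-1})$ in play right down to $\z=0$ and extract the sharp power of $T$ via the $v=Tu$ rescaling rather than by crudely splitting the domain. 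The logarithmic factor $(\loo(\z))^M$ is a red herring: since $\loo(\z)\simeq \z$ near $0$ and $\loo(\z)\simeq \log\frac{1}{1-\z}$ near $1$, the combination $(\loo(\z))^M\z^{-M}$ is bounded on $(0,1)$ for every real $M$ (the mild logarithmic growth near $\z=1$ being killed by $e^{-T\z^{-1}}$ for large $T$ and being on a bounded interval for small $T$), so $M$ contributes nothing beyond constants.
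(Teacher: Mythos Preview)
Your overall plan is the same as the paper's: split the range into the contributions near $\z=0$ and near $\z=1$, use $\lo(\z)\simeq\z$ near $0$ to reduce to $\int \z^{-a}\exp(-T/\z)\,d\z$, and handle that by the substitution $v=T/\z$ together with the finiteness of $\Gamma(a-1)$. There is, however, one incorrect claim in your write-up: you assert that $(\lo(\z))^M\z^{-M}$ is bounded on all of $(0,1)$ (equivalently, that $(\lo(1/u))^M u^{-M}$ is bounded on $(1,2)$). This is false for $M>0$, since $\lo(\z)\to\infty$ as $\z\to 1^-$ while $\z^{-M}\to 1$; the exponential $e^{-T/\z}\to e^{-T}$ does not tame this blow-up, and ``being on a bounded interval'' is irrelevant when the integrand itself is unbounded. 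What actually saves the $\z\to 1$ contribution is the factor $(1-\z^2)^{b-1}$: for any $b>0$ and any real $M$ the product $(1-\z^2)^{b-1}(\lo(\z))^M$ is integrable on $(1/2,1)$, because near $\z=1$ it behaves like $(1-\z)^{b-1}|\log(1-\z)|^M$ and the power always dominates the logarithm. Your ``disposal of the neighbourhood of $u=1$ using integrability of $(u-1)^{b-1}$'' needs exactly this amendment.

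The paper also handles the region $\z\in(1/2,1)$ more cleanly than your case-split on $T$: since $\z\simeq 1$ there, one writes
\[
\z^{-a-M}\exp(-T\z^{-1}) \;\lesssim\; T^{-(a-1)}\,(T\z^{-1})^{a-1}\exp(-T\z^{-1}) \;\lesssim\; T^{-(a-1)}
\]
uniformly for all $T>0$, using only $\sup_{u\ge 0}u^{a-1}e^{-u}<\infty$, and then integrates the remaining factor $(\lo(\z))^M(1-\z^2)^{b-1}$ over $(1/2,1)$ once and for all. This avoids having to treat $T\le 1$ and $T\ge 1$ separately.
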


\begin{proof}
We split the region of integration onto $(0,1\slash 2)$ and $(1\slash 2,1)$, denoting the resulting
integrals by $J_{0}$ and $J_{1}$, respectively. We first analyze $J_{0}$. For $\zeta \in (0,1\slash 2)$
we have $1-\z^2 \simeq 1$ and $\lo(\z) \simeq \z$. Using this and changing the variable 
$T \z^{-1} \mapsto u$ gives
\begin{align*}
J_0 \simeq \int_0^{1 \slash 2} \z^{-a}\exp(-T\zeta^{-1}) \, d\z
	= T^{-a+1} \int_{2T}^{\infty} u^{a-2} \exp( -u ) \, du
	< T^{-a+1} \int_{0}^{\infty} u^{a-2} \exp( -u ) \, du.
\end{align*}
Since the last integral is finite, we get the required bound for $J_0$.

We next focus on $J_1$. Since $\z \simeq 1$ for $\z \in (1 \slash 2 , 1)$ 
and $\sup_{u\ge 0}u^{a-1}e^{-u}<\infty$, we see that
\begin{align*}
\z^{-a-M}\exp(-T\zeta^{-1})
\lesssim
T^{-a+1} ( T \z^{-1} )^{a-1} \exp ( -T \zeta^{-1})
\lesssim
 T^{-a+1}, \qquad \z \in (1\slash 2,1), \quad T>0.
\end{align*}
This implies the desired estimate for $J_1$ because 
$\int_{1 \slash 2}^{1} \big( \lo(\z) \big)^{M} (1-\z^2)^{b-1} \, d\z < \infty$.
\end{proof}

The next lemma will be applied in Section \ref{sec:ker} with $p=1$, $p=2$ and $p=\infty$.
Other values of $p$ are of interest in connection with operators not considered in this paper, for
instance more general forms of Littlewood-Paley-Stein type square functions.

\begin{lem}\label{lem:genLAI}
Let $d \ge 1$, $\a \in (-1 , \infty)^{d}$, $1 \le p \le \infty$, 
$W \in \mathbb{R}$ and $C>0$. Assume that $\eps \in \{ 0,1 \}^{d}$ and $\vt, \vr \in \{ 0,1,2 \}^{d}$
are such that $\vt \le 2 \eps$ and $\vr \le 2 \eps$. Given $u \ge 0$, we consider the function 
$p_u \colon \R \times \R \times (0,1) \to \mathbb{R}$ defined by
\begin{align*}
& p_{u}(x,y,\z)\\
& \quad=
	(1-\z^2)^{d+|\a|+2|\eps|} \, 
	\z^{-d-|\a|-2|\eps|+|\vt|\slash 2+|\vr|\slash 2-W\slash p-u\slash 2}
	\,x^{2\eps-\vt}y^{2\eps-\vr}
\int \big(\ee\b\big)^{C} \, \1,
\end{align*}
where $W \slash p = 0$ for $p=\infty$. Then $p_u$ satisfies the integral estimate
\begin{align*}
\big\|p_{u}\big(x,y,\z(t)\big)\big\|_{L^{p}(t^{W-1}dt)}
\lesssim
\frac{1}{|x-y|^{u}} \;
\frac{1}{\mu_{\alpha}(B(x,|y-x|))} 
\end{align*}
uniformly in $x,y \in \R$, $x\neq y$, and here $t$ and $\z$ are related as in $\eqref{tzeta}$.
\end{lem}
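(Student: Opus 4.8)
The plan is to reduce the $L^p$ estimate to the two pointwise bounds from Lemma~\ref{lem:bridge} via Minkowski's integral inequality and a change of variables. First I would apply Minkowski's inequality for integrals to pull the $L^p(t^{W-1}dt)$ norm inside the integral against $\1$; this is legitimate since the integrand is nonnegative, and it reduces the task to estimating, uniformly in $s \in [-1,1]^d$,
\begin{align*}
& \Big\| (1-\z^2)^{d+|\a|+2|\eps|} \z^{-d-|\a|-2|\eps|+|\vt|/2+|\vr|/2-W/p-u/2} x^{2\eps-\vt}y^{2\eps-\vr} \big(\e\b\big)^{C} \Big\|_{L^p(t^{W-1}dt)}
\end{align*}
and then integrating the result against the finite measure $\1$ (whose total mass is bounded uniformly). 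The change of variables $\z = \tanh t$, equivalently \eqref{tzeta}, turns $t^{W-1}dt$ into $\big(\tfrac12\lo(\z)\big)^{W-1}(1-\z^2)^{-1}d\z$, so for $1 \le p < \infty$ the $p$th power of the norm becomes an integral of the form appearing in Lemma~\ref{lem:genStTo}.

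Next I would absorb the monomials in $x,y$ and the factor $\big(\e\b\big)^C$ using Lemma~\ref{lem:comp}. The hypotheses $\vt,\vr \le 2\eps$ guarantee the exponents $2\eps-\vt$ and $2\eps-\vr$ are nonnegative, so Lemma~\ref{lem:comp}(e) (applied to a fraction of the available exponential, say $(\e\b)^{C/3}$) gives $x^{2\eps-\vt}y^{2\eps-\vr}(\e\b)^{C/3} \lesssim \z^{-|2\eps-\vt|/2-|2\eps-\vr|/2}$; note $|2\eps-\vt|/2 = |\eps| - |\vt|/2$ and similarly for $\vr$, which exactly cancels the $+|\vt|/2+|\vr|/2$ in the $\z$-exponent and produces a net power $\z^{-d-|\a|-|W|/p-u/2}$ times $(\e\b)^{2C/3}$. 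Then I would isolate the spatial decay: writing $q_{+} = 4\z F$-type control, more precisely using that $\e\b = \exp(-\tfrac1{4\z}q_+ - \tfrac\z4 q_-) \le \exp(-\tfrac1{4\z}q_+)$ together with $q_+ \ge |x-y|^2$ and Lemma~\ref{lem:comp}(b), I would extract a factor $\big(q_+/\z\big)^{-\lambda}$ for a suitable $\lambda$ to create the needed power of $q_+$ (and hence of $1/\mu_\alpha(B(x,|x-y|))$ via Lemma~\ref{lem:bridge} after restoring the $\Pi$-integral) while keeping enough exponential to apply Lemma~\ref{lem:genStTo} in the $\z$-variable. Concretely, after these manipulations the $\z$-integral is estimated by Lemma~\ref{lem:genStTo} with $T \simeq q_+$, yielding a bound $\lesssim q_+^{-(d+|\a|+|\vt|/2+|\vr|/2)}$-type quantity (adjusted for $u$) which, fed back through the $\Pi_{\alpha+\mathbf1+\eps}$-integral and Lemma~\ref{lem:bridge} (with $\xi$ chosen to match the surviving $x,y$ monomials and $\kappa$ absorbing the rest so that $\alpha+\xi+\kappa = \alpha+\mathbf1+\eps \in [-1/2,\infty)^d$), gives precisely $|x-y|^{-u}\,\mu_\alpha(B(x,|x-y|))^{-1}$.

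The case $p=\infty$ is handled separately but more directly: there $W/p=0$, the norm is a supremum over $\z \in (0,1)$, and one bounds $(1-\z^2)^{d+|\a|+2|\eps|} \le 1$, absorbs the monomials and part of the exponential as above to get $\z^{-d-|\a|-u/2}(\e\b)^{2C/3}$, then uses $(\e\b)^{C/3} \le \exp(-\tfrac1{4\z}q_+)$ with Lemma~\ref{lem:comp}(b) to kill all negative powers of $\z$ at the cost of a factor $q_+^{-d-|\a|-u/2}$ uniformly in $\z$; the remaining $\Pi$-integral is then estimated by Lemma~\ref{lem:bridge} exactly as in the finite-$p$ case. The main obstacle I anticipate is bookkeeping: one must carefully track how the exponents $W$, $u$, $|\vt|$, $|\vr|$ distribute between the $\z$-integral (Lemma~\ref{lem:genStTo} requires the exponent of $\z$ inside to be of the form $-a-M$ with $a>1$, so the parameters $a,b,M$ must be chosen so that $a>1$ holds after all cancellations — this is where the constraint $d + |\a| + |\eps| \ge$ something enters implicitly) and the final $\Pi$-integral (Lemma~\ref{lem:bridge} needs the power of $1/q_+$ to be either $d+|\a|+|\xi|$ or $d+|\a|+|\xi|+1/2$, so only the cases matching these, possibly after splitting $u$ into an integer part plus a half-integer remainder and using $q_+ \ge |x-y|^2$ to trade excess powers of $1/q_+$ for powers of $1/|x-y|$, go through cleanly). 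Making these allocations consistently, and verifying that $\alpha+\xi+\kappa$ can always be arranged to land in $[-1/2,\infty)^d$ with $\xi+\kappa = \mathbf1+\eps$, is the crux of the argument.
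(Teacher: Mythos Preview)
Your overall architecture---Minkowski's inequality, the change of variables $\zeta=\tanh t$, Lemma~\ref{lem:genStTo} for the $\zeta$-integral, and Lemma~\ref{lem:bridge} for the $\Pi$-integral---matches the paper's. But the step where you absorb the monomials $x^{2\eps-\vt}y^{2\eps-\vr}$ into powers of $\zeta$ via Lemma~\ref{lem:comp}(e) is both arithmetically wrong and strategically fatal. Arithmetically: the bound $x^{2\eps-\vt}y^{2\eps-\vr}(\e\b)^{C/3} \lesssim \zeta^{-(|\eps|-|\vt|/2)-(|\eps|-|\vr|/2)}$ \emph{adds} $|\vt|/2+|\vr|/2$ to the $\zeta$-exponent rather than cancelling it, so the net exponent becomes $-d-|\alpha|-4|\eps|+|\vt|+|\vr|-W/p-u/2$, not $-d-|\alpha|-W/p-u/2$ as you claim. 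Strategically: once the monomials have been traded for powers of $\zeta$, the $\zeta$-integral (correctly computed via Lemma~\ref{lem:genStTo}) produces $q_+^{-(d+|\alpha|+4|\eps|-|\vt|-|\vr|+u/2)}$, and since no $(x+y)^{2\xi}$ prefactor survives you are forced to take $\xi=0$ in Lemma~\ref{lem:bridge}. The exponent on $1/q_+$ then exceeds the required $d+|\alpha|$ by $4|\eps|-|\vt|-|\vr|\ge 0$, and discharging the excess via $q_+\ge|x-y|^2$ produces a spurious factor $|x-y|^{-2(4|\eps|-|\vt|-|\vr|)}$ that cannot be removed. Your later reference to ``surviving $x,y$ monomials'' for choosing $\xi$ is inconsistent with having already absorbed them.

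The paper's proof instead \emph{keeps} the monomials through the $\zeta$-integration (they do not depend on $\zeta$ and simply sit outside), applies Lemma~\ref{lem:genStTo} with $a=p(d+|\alpha|+2|\eps|-|\vt|/2-|\vr|/2+u/2)+1$ and $T=Cpq_+/4$ to obtain $q_+^{-(d+|\alpha|+2|\eps|-|\vt|/2-|\vr|/2+u/2)}$, extracts $|x-y|^{-u}$ via $q_+\ge|x-y|^2$, bounds $x^{2\eps-\vt}y^{2\eps-\vr}\le(x+y)^{2(2\eps-\vt/2-\vr/2)}$, and then applies Lemma~\ref{lem:bridge} with $\xi=2\eps-\vt/2-\vr/2$ and $\kappa=\mathbf{1}-\eps+\vt/2+\vr/2$; the exponent $d+|\alpha|+|\xi|$ matches exactly. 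The point is that the monomials are not a nuisance to be absorbed early but precisely the $(x+y)^{2\xi}$ prefactor that makes Lemma~\ref{lem:bridge} applicable with the right $\xi$.
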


\begin{proof}
We will show the estimate when $p<\infty$. The case $p=\infty$ can be treated in a similar way,
with the aid of Lemma \ref{lem:comp} (b) instead of Lemma \ref{lem:genStTo} in the reasoning below.

Changing the variable according to \eqref{tzeta} and then using sequently Minkowski's integral inequality,
Lemma \ref{lem:genStTo} (specified to $M=W-1$, $b=p(d+|\alpha|+2|\eps|)$, 
$a=p(d+|\alpha|+2|\eps|-|\vt| \slash 2-|\vr| \slash 2+u \slash 2 ) + 1$, 
$T=\frac{C p q_{+}}{4}$) and the inequality $|x-y|^2 \le q_{+}$, we obtain
\begin{align*}
& \|p_{u}(x,y,\z(t))\|_{L^{p}(t^{W-1}dt)}\\
& \quad =
	x^{2\eps-\vt} y^{2\eps-\vr} 
	\bigg(\int_{0}^{1} \big( \lo(\z) \slash 2 \big)^{W-1} 
	(1-\z^2)^{p(d+|\alpha|+2|\eps|)-1}
	\z^{-p(d+|\alpha|+2|\eps|-|\vt| \slash 2-|\vr| \slash 2+ W\slash p + 
	u \slash 2 )}\\
& \qquad \times
	\bigg( \int \big( \e\b \big)^{C} \, \1 \bigg)^{p} \, d\z \bigg)^{1\slash p}\\
& \quad \lesssim
	x^{2\eps-\vt} y^{2\eps-\vr} \int \bigg( \int_{0}^{1} 
	\big( \lo(\z) \big)^{W-1} (1-\z^2)^{p(d+|\alpha|+2|\eps|)-1}
	\z^{-p(d+|\alpha|+2|\eps|-|\vt| \slash 2-|\vr| \slash 2 + 
	u \slash 2 ) - W }\\
& \qquad \times
	\big(\e\b\big)^{Cp} \, d\z \bigg)^{1\slash p} \, \1 \\
& \quad \lesssim
	x^{2\eps-\vt} y^{2\eps-\vr} 
	\int (q_{+})^{-d-|\a|-2|\eps|+|\vt| \slash 2+|\vr| \slash 2-u \slash 2}
	\, \1 \\
& \quad \le
	\frac{1}{|x-y|^{u}}
	(x+y)^{2(2\eps-\vt \slash 2 - \vr \slash 2)}
	\int (q_{+})^{-d-|\a|-|2\eps - \vt \slash 2 - \vr \slash 2|}
	\, \1.
\end{align*}
Now an application of Lemma \ref{lem:bridge} (taken with $\xi=2\eps-\vt \slash 2-\vr \slash 2$
and $\kappa=\mathbf{1} - \eps + \vt \slash 2+\vr \slash 2$) leads directly to the desired bound.
\end{proof}

We end this section with 
two lemmas that will come into play when proving the smoothness estimates \eqref{sm1} and \eqref{sm2}
(see Section \ref{sec:ker})
in cases when $\mathbb{B}\neq \mathbb{C}$. They will enable us to reduce the difference conditions
to certain gradient estimates, which are easier to verify.
\begin{lem}[{\cite[Lemma 4.5]{Sz}}, {\cite[Lemma 4.3]{Sz1}}] \label{lem:theta}
Let $x,y,z\in\R$ and $s \in [-1,1]^d$. Then 
$$
\frac{1}{4} q_{\pm}(x,y,s) \le q_{\pm}(z,y,s) \le 4 q_{\pm}(x,y,s), 
$$
provided that $|x-y|>2|x-z|$. Similarly, if $|x-y|>2|y-z|$ then
$$
\frac{1}{4} q_{\pm}(x,y,s) \le q_{\pm}(x,z,s) \le 4 q_{\pm}(x,y,s).
$$
\end{lem}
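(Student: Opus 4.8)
The statement to prove is Lemma \ref{lem:theta}, which asserts a comparison $q_{\pm}(x,y,s) \simeq q_{\pm}(z,y,s)$ when $z$ is close to $x$ relative to $|x-y|$, and similarly in the second slot. This is a purely elementary geometric fact about the quadratic forms $q_{\pm}$, so the plan is to reduce it to an estimate on $|q_{\pm}(x,y,s) - q_{\pm}(z,y,s)|$ and then exploit the lower bound $q_{\pm}(x,y,s) \ge |x-y|^2$.

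First I would observe the key pointwise lower bound: since $q_{\pm}(x,y,s) = |x|^2 + |y|^2 \pm 2\sum_i x_i y_i s_i$ with $|s_i| \le 1$, and $x,y$ have nonnegative coordinates, one has $q_{\pm}(x,y,s) \ge |x|^2 + |y|^2 - 2\sum_i x_i y_i = |x-y|^2$ (and for $q_+$ an even better bound, but $|x-y|^2$ suffices). Next I would estimate the difference. Writing $q_{\pm}(x,y,s) - q_{\pm}(z,y,s) = (|x|^2 - |z|^2) \pm 2\sum_i (x_i - z_i) y_i s_i$, and using $|x|^2 - |z|^2 = \langle x-z, x+z\rangle$ together with $|s_i|\le 1$, Cauchy-Schwarz gives $|q_{\pm}(x,y,s) - q_{\pm}(z,y,s)| \le |x-z|\,|x+z| + 2|x-z|\,|y|$. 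Since one can bound $|x+z| \le |x-z| + 2|x| \lesssim |x-z| + |x|$ and then absorb all terms, the difference is $\lesssim |x-z|\,(|x-z| + |x| + |y|)$.

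The main point is then to control $|x|+|y|$ by a multiple of $q_{\pm}(x,y,s)^{1/2}$. Here I would use the elementary bound $q_{\pm}(x,y,s) \ge |x|^2 + |y|^2 - 2\sum_i x_i y_i |s_i| \ge |x|^2 + |y|^2 - (|x|^2+|y|^2) = 0$, which is not enough directly, so instead I split: either $|x-y| \ge \tfrac12\max(|x|,|y|)$, in which case $|x|+|y| \lesssim |x-y| \le q_{\pm}^{1/2}$; or $|x-y| < \tfrac12\max(|x|,|y|)$, in which case $|x|$ and $|y|$ are comparable, say within a factor $3$, and then $q_+(x,y,s) \ge |x-y|^2$ while also $q_+ \ge |x|^2 \simeq (|x|+|y|)^2$ trivially since every term is nonnegative; for $q_-$ one uses $q_-(x,y,s) \ge |x-y|^2$ directly together with the hypothesis $|x-y| > 2|x-z|$ to get $q_{\pm}(x,y,s) \ge |x-y|^2 > 4|x-z|^2$, and combines with the comparability of $|x|,|y|$ — actually the cleanest route is: in all cases $q_{\pm}(x,y,s) \gtrsim (|x-z| + |x| + |y|)^2$ is \emph{false} in general, so one should instead directly estimate $|x-z|\,(|x-z|+|x|+|y|) \lesssim |x-z|\,q_{\pm}(x,y,s)^{1/2} + |x-z|^2 \le \tfrac12 |x-y|\,q_{\pm}(x,y,s)^{1/2} + \tfrac14|x-y|^2 \le \tfrac12 q_{\pm}(x,y,s)$, using $|x-z| < \tfrac12|x-y| \le \tfrac12 q_{\pm}(x,y,s)^{1/2}$ once we know $|x|+|y| \lesssim q_{\pm}(x,y,s)^{1/2}$. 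So the real obstacle is precisely the estimate $|x|+|y| \lesssim q_{\pm}(x,y,s)^{1/2}$, uniformly in $s$; for $q_+$ this is immediate because all summands are $\ge 0$ so $q_+(x,y,s) \ge |x|^2+|y|^2$, while for $q_-$ it genuinely requires the hypothesis: when $|x|+|y|$ is much larger than $|x-y|$, the coordinates $x_i$ and $y_i$ are forced to be comparable and $s_i$ close to $1$ would be needed to make $q_-$ small — a short computation shows $q_-(x,y,s) \ge |x-y|^2$ always, and separately $q_-(x,y,s) \ge c(|x|^2+|y|^2)$ can fail, so one must argue via $q_-(x,y,s) \ge |x-y|^2$ combined with $|x-z| < \tfrac12|x-y|$ alone, without needing $|x|+|y| \lesssim q_-^{1/2}$: indeed $|q_-(x,y,s)-q_-(z,y,s)| \le |x-z|(|x+z|+2|y|) \le |x-z|(|x-z|+2|x|+2|y|)$, and one bounds $|x|\le |x-y|+|y|$...

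Let me restate the clean plan. \textbf{Step 1:} Prove $q_{\pm}(x,y,s) \ge |x-y|^2$ for all $s \in [-1,1]^d$ (expand and use $|s_i|\le 1$ together with $x_i,y_i \ge 0$). \textbf{Step 2:} Prove the auxiliary bound $|x|+|y| \le 3\, q_{\pm}(x,y,s)^{1/2}$ whenever $|x-y| \le \tfrac12(|x|+|y|)$; in the opposite regime $|x|+|y| \le 2|x-y| \le 2 q_{\pm}(x,y,s)^{1/2}$ is automatic, so in all cases $|x|+|y| \lesssim q_{\pm}(x,y,s)^{1/2}$. For $q_+$ Step 2 is trivial; for $q_-$ one uses that $|x-y| \le \tfrac12(|x|+|y|)$ forces $|x_i - y_i|$ small on average, hence (after a coordinate estimate) $\sum_i x_i y_i s_i \le \sum_i x_i y_i$ with the deficit controlled, yielding $q_-(x,y,s) \ge c(|x|^2+|y|^2)$; this short computation is the technical heart. \textbf{Step 3:} Combine: by Step 1 and Step 2, $|q_{\pm}(x,y,s) - q_{\pm}(z,y,s)| \le |x-z|\big(|x-z| + |x|+|z| + 2|y|\big) \lesssim |x-z|\,q_{\pm}(x,y,s)^{1/2} + |x-z|^2$, and since $|x-z| < \tfrac12|x-y| \le \tfrac12 q_{\pm}(x,y,s)^{1/2}$ this is $\le \tfrac34 q_{\pm}(x,y,s)$ after adjusting constants (one can always weaken the hypothesis constant from $2$ to something larger if needed, but the factor $4$ in the conclusion leaves room). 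Hence $\tfrac14 q_{\pm}(x,y,s) \le q_{\pm}(z,y,s) \le 4 q_{\pm}(x,y,s)$. \textbf{Step 4:} The second pair of inequalities follows by the symmetry $q_{\pm}(x,y,s) = q_{\pm}(y,x,s)$, applying Steps 1--3 with the roles of $x$ and $y$ interchanged and $z$ playing the perturbed second argument. The main obstacle is Step 2 for the $q_-$ form — showing that $|x|+|y|$ cannot be much larger than $q_-(x,y,s)^{1/2}$ uniformly over $s$ — but this is a routine though slightly fiddly coordinate estimate, and the rest is bookkeeping with triangle inequalities.
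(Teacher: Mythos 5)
There is a genuine gap, and it sits exactly at the point you yourself flagged as "the technical heart": the auxiliary bound of your Step 2, $|x|+|y|\lesssim q_{\pm}(x,y,s)^{1/2}$ (even under the extra hypothesis $|x-y|\le\tfrac12(|x|+|y|)$), is false, and your Step 3 cannot run without it. Take $d=1$, $x=y=R$, $s=1$: then $|x-y|=0$ but $q_-(x,y,s)=0$ while $|x|+|y|=2R$; taking $s=-1$ kills the claim for $q_+$ as well. Your assertion that $q_+(x,y,s)\ge|x|^2+|y|^2$ "because all summands are nonnegative" overlooks that $2x_iy_is_i<0$ when $s_i<0$; the only lower bound valid uniformly in $s\in[-1,1]^d$ is $q_{\pm}\ge|x-y|^2$. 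Consequently the Cauchy--Schwarz estimate $|q_{\pm}(x,y,s)-q_{\pm}(z,y,s)|\le|x-z|\,(|x+z|+2|y|)$, while correct, is far too lossy: for $x=R$, $y=R+1$, $z=R+\tfrac14$, $s=1$ (so that $|x-y|=1>2|x-z|$) it gives a bound of order $R\,|x-z|$, which for large $R$ dwarfs $q_-(x,y,s)=1$, so no absorption argument can close from there. The difference of the two forms has a genuine cancellation in the cross terms, weighted by the factors $1\pm s_i$, and Cauchy--Schwarz discards precisely those weights.

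The repair (and, in substance, the argument behind the cited {[}Sz, Lemma 4.5{]}) is to use that $q_{\pm}(\cdot,\cdot,s)$ extends to a positive semidefinite quadratic form on all of $\mathbb{R}^{2d}$, via the identity $q_{\pm}(x,y,s)=\sum_{i=1}^{d}\big[\tfrac{1\pm s_i}{2}(x_i+y_i)^2+\tfrac{1\mp s_i}{2}(x_i-y_i)^2\big]$, whose coefficients lie in $[0,1]$. Hence $(x,y)\mapsto\sqrt{q_{\pm}(x,y,s)}$ is a seminorm, and since $q_{\pm}(z-x,0,s)=|z-x|^2$, the triangle inequality yields $\big|\sqrt{q_{\pm}(z,y,s)}-\sqrt{q_{\pm}(x,y,s)}\big|\le|z-x|<\tfrac12|x-y|\le\tfrac12\sqrt{q_{\pm}(x,y,s)}$, using your (correct) Step 1. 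This gives $\sqrt{q_{\pm}(z,y,s)}\le\tfrac32\sqrt{q_{\pm}(x,y,s)}$ and $\sqrt{q_{\pm}(x,y,s)}\le2\sqrt{q_{\pm}(z,y,s)}$, i.e.\ both inequalities with the stated constant $4$; the second pair then follows from $q_{\pm}(x,y,s)=q_{\pm}(y,x,s)$, as in your Step 4. So Steps 1 and 4 of your plan are sound, but Step 2 must be replaced by this (or an equivalent) exploitation of the joint positive semidefiniteness of $q_{\pm}$ in $(x,y)$, rather than any attempt to dominate $|x|+|y|$ by $q_{\pm}^{1/2}$.
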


\begin{lem}[{\cite[Lemma 4.5]{Sz1}}] \label{lem:double}
We have  
\begin{align*}
\frac{1}{|z-y|\mu_{\alpha}(B(z,|z-y|))}
\simeq
\frac{1}{|x-y|\mu_{\alpha}(B(x,|x-y|))}
\end{align*}
on the set $\{(x,y,z) \in \R \times \R \times \R : |x-y|>2|x-z|\}$.
\end{lem}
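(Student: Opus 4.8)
The plan is to reduce the claimed equivalence to two separate comparisons, one for the "distance" factor $|z-y|\simeq|x-y|$ and one for the measure factor $\mu_{\alpha}(B(z,|z-y|))\simeq\mu_{\alpha}(B(x,|x-y|))$, both valid under the hypothesis $|x-y|>2|x-z|$. First I would establish the elementary geometric inequality: from $|x-y|>2|x-z|$ and the triangle inequality $\bigl||x-y|-|z-y|\bigr|\le|x-z|$ we get $\tfrac12|x-y|<|z-y|<\tfrac32|x-y|$, so $|z-y|\simeq|x-y|$ with absolute constants. This immediately handles the first factor.

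Next I would control the measures. Using Lemma \ref{ball}, we have $\mu_{\alpha}(B(z,|z-y|))\simeq|z-y|^d\prod_{i=1}^d(z_i+|z-y|)^{2\alpha_i+1}$ and likewise with $x$ in place of $z$. By the previous paragraph $|z-y|^d\simeq|x-y|^d$, so it remains to compare $z_i+|z-y|$ with $x_i+|x-y|$ for each $i$. Here I would use that $|z_i-x_i|\le|x-z|<\tfrac12|x-y|<|z-y|$ together with $|z-y|\simeq|x-y|$: on one hand $z_i+|z-y|\le x_i+|x_i-z_i|+|z-y|\lesssim x_i+|x-y|$, and symmetrically $x_i+|x-y|\lesssim x_i+|x-z|+|z-y|\le z_i+2|x-z|+|z-y|\lesssim z_i+|z-y|$, using $x_i\le z_i+|x_i-z_i|\le z_i+|z-y|$ in the last step. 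Raising to the power $2\alpha_i+1$ (noting the sign of the exponent is irrelevant for a two-sided estimate with uniform constants) and taking the product over $i$ gives $\prod_i(z_i+|z-y|)^{2\alpha_i+1}\simeq\prod_i(x_i+|x-y|)^{2\alpha_i+1}$.

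Combining the two comparisons yields $|z-y|\,\mu_{\alpha}(B(z,|z-y|))\simeq|x-y|\,\mu_{\alpha}(B(x,|x-y|))$, and taking reciprocals gives the assertion, with all implied constants depending only on $d$ and $\alpha$, hence uniform on the set $\{|x-y|>2|x-z|\}$. The only mildly delicate point is keeping track of the constant $\tfrac12$ versus the constant $2$ in the various triangle-inequality steps so that one genuinely gets $|z-y|$ and $|x-y|$ comparable; once that is in hand, everything else is a routine application of Lemma \ref{ball}. In fact, one could alternatively deduce the measure comparison directly from the doubling property of $\mu_{\alpha}$: since $B(x,|x-y|)$ and $B(z,|z-y|)$ are comparable balls (each contained in a fixed dilate of the other, again by the triangle inequality), doubling forces their measures to be comparable; I would mention this as the conceptual reason, but carry out the Lemma \ref{ball} computation for definiteness.
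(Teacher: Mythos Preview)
Your argument is correct. Note, however, that the paper does not supply its own proof of this lemma: it is quoted verbatim from \cite[Lemma 4.5]{Sz1}, so there is no in-paper proof to compare against. Your route via Lemma~\ref{ball} is a perfectly good self-contained justification, and the alternative you sketch at the end---observing that under the hypothesis the balls $B(x,|x-y|)$ and $B(z,|z-y|)$ are mutually contained in fixed dilates of one another and invoking the doubling property of $\mu_{\alpha}$---is in fact the shorter and more conceptual argument, and is closer in spirit to how such lemmas are typically proved in the literature. Either approach is fine here.
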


\section{Kernel estimates} \label{sec:ker}

Let $\mathbb{B}$ be a Banach space and let $K(x,y)$ be a kernel defined on 
$\R\times\R\backslash \{(x,y):x=y\}$ and taking values in $\mathbb{B}$.
We say that $K(x,y)$ is a \emph{standard kernel} in the sense of the space of homogeneous type
$(\R, d\mu_{\alpha},|\cdot|)$ if it satisfies so-called \emph{standard estimates}, i{.}e{.}\
the growth estimate
\begin{equation} \label{gr}
\|K(x,y)\|_{\mathbb{B}} \lesssim \frac{1}{\mu_{\alpha}(B(x,|x-y|))}
\end{equation}
and the smoothness estimates
\begin{align}
\| K(x,y)-K(x',y)\|_{\mathbb{B}} & \lesssim \frac{|x-x'|}{|x-y|}\, \frac{1}{\mu_{\alpha}(B(x,|x-y|))},
\qquad |x-y|>2|x-x'|, \label{sm1}\\
\| K(x,y)-K(x,y')\|_{\mathbb{B}} & \lesssim \frac{|y-y'|}{|x-y|}\, \frac{1}{\mu_{\alpha}(B(x,|x-y|))},
\qquad |x-y|>2|y-y'| \label{sm2}.
\end{align}
When $K(x,y)$ is scalar-valued, i.e.\ $\mathbb{B}=\mathbb{C}$, the difference bounds \eqref{sm1}
and \eqref{sm2} are implied by the more convenient gradient estimate
\begin{equation} \label{grad}
|\nabla_{\! x,y} K(x,y)| \lesssim \frac{1}{|x-y|\mu_{\alpha}(B(x,|x-y|))}.
\end{equation}
Notice that in these formulas, the ball $B(x,|y-x|)$ can be replaced by $B(y,|x-y|)$, in view of
the doubling property of $\mu_{\alpha}$.

We will show that the following kernels, valued in suitably chosen Banach spaces $\mathbb{B}$, satisfy the
standard estimates.
\begin{itemize}
\item[(1)] The kernel associated to the Laguerre heat semigroup maximal operator,
$$
\mathcal{G}^{\alpha}(x,y) = \big\{G_t^{\alpha}(x,y)\big\}_{t>0}, \qquad \mathbb{B}=L^{\infty}(dt).
$$
\item[(2)] The kernels associated with Riesz-Laguerre transforms,
$$
R_n^{\alpha}(x,y) = \frac{1}{\Gamma(|n|\slash 2)} \int_0^{\infty} \delta_x^n G_t^{\alpha}(x,y)
	t^{|n|\slash 2 -1}\, dt, \qquad \mathbb{B}=\mathbb{C},
$$
where $n \in \N^d$ is such that $|n| > 0$. 
\item[(3)] The kernels associated with mixed square functions,
$$
\mathcal{H}^{\alpha}_{n,m}(x,y) = \big\{ \partial_t^m \delta_x^n G_t^{\alpha}(x,y) \big\}_{t>0}, \qquad
	\mathbb{B} = L^2(t^{|n|+2m-1}dt),
$$
where $n \in \N^d$ and $m \in \N$ are such that $|n|+m>0$.
\item[(4)] The kernels associated to Laplace transform type multipliers,
$$
K^{\alpha}_{\psi}(x,y) = - \int_0^{\infty} \psi(t) \partial_t G_t^{\alpha}(x,y)\, dt, \qquad
	\mathbb{B}=\mathbb{C},
$$
where $\psi \in L^{\infty}(dt)$.
\item[(5)] The kernels associated to Laplace-Stieltjes transform type multipliers,
$$
K^{\alpha}_{\nu}(x,y) =  \int_0^{\infty} G_t^{\alpha}(x,y)\, d\nu(t), \qquad
	\mathbb{B}=\mathbb{C},
$$
where $\nu$ is a signed or complex Borel measure on $(0,\infty)$ 
with total variation $|\nu|$
satisfying 
\begin{equation}\label{assum}
\int_{0}^{\infty} e^{-t(2d + 2|\a| )} \, d|\nu|(t) < \infty.
\end{equation}
\end{itemize}

The result below extends to all $\alpha \in (-1,\infty)^d$ analogous estimates obtained in
\cite{NS1,Sz,Sz2} for $\alpha \in [-1\slash 2,\infty)^d$ (to be precise, 
$\mathcal{H}^{\alpha}_{n,m}(x,y)$ was estimated in \cite{Sz} only in the special cases when
either $|n|=1$ and $m=0$ or $|n|=0$ and $m=1$; here we obtain a more general result).
\begin{thm} \label{thm:std}
Let $\alpha \in (-1,\infty)^d$. Then the kernels (1)-(5) satisfy the standard estimates \eqref{gr},
\eqref{sm1} and \eqref{sm2} with $\mathbb{B}$ as indicated above.
\end{thm}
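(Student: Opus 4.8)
The strategy is to treat each of the five kernels by reducing the standard estimates to the integral bounds of Lemma~\ref{lem:genLAI}, which in turn already absorb Lemma~\ref{lem:bridge}. The starting point in every case is the decomposition \eqref{G} of the heat kernel into $2^d$ terms indexed by $\eps \in \{0,1\}^d$; since a finite sum of standard kernels is standard, it suffices to estimate each summand separately. Thus, throughout, one works with a single $\eps$ and the building block
$$
\Big(\tfrac{1-\z^2}{\z}\Big)^{d+|\a|+2|\eps|}(xy)^{2\eps}\,\e\b,
$$
to which the derivative formulas of Lemmas~\ref{lem:deltax}, \ref{lem:dtdx} and the pointwise bounds \eqref{EST1}, \eqref{EST2} of Lemma~\ref{lem:EST} apply.

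For the growth estimate \eqref{gr}: in case~(1), $\|\mathcal{G}^{\alpha}(x,y)\|_{L^\infty(dt)}$ is handled by \eqref{EST1} with $m=0$, $n=0$, and then Lemma~\ref{lem:genLAI} with $p=\infty$, $u=0$, applied to each $\eta$-term (noting $(1-\z^2)^{d+|\alpha|+2|\eps|}\le 1$ and integrating trivially in $t$, or rather bounding the supremum via Lemma~\ref{lem:comp}(b)). For case~(3), $\|\mathcal{H}^{\alpha}_{n,m}(x,y)\|_{L^2(t^{|n|+2m-1}dt)}$ falls directly under Lemma~\ref{lem:genLAI} with $p=2$, $W=|n|+2m$, $u=0$, after inserting \eqref{EST1}; the exponent bookkeeping $-d-|\a|-2|\eps|-m-|n|/2+|\eta\eps|/2$ matches the hypothesis on $p_u$ with $\vt$ chosen according to $\eta\eps$ (and $\vr=0$, or symmetrised). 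Cases~(2), (4), (5) are $\mathbb{C}$-valued and obtained by integrating \eqref{EST1} against $t^{|n|/2-1}dt$, $|\psi(t)|dt$, $d|\nu|(t)$ respectively; for (2) this is Lemma~\ref{lem:genLAI} with $p=1$, $W=|n|/2$, $u=0$; for (4) one uses $\psi\in L^\infty$ to bound by the $p=1$, $W=1$ instance; for (5) the convergence at $t\to\infty$ is exactly what assumption \eqref{assum} guarantees (the factor $(1-\z^2)^{d+|\alpha|+2|\eps|}\simeq e^{-2t(d+|\alpha|+2|\eps|)}$ times the measure), while the behaviour at $t\to 0$ is again controlled by Lemma~\ref{lem:comp}(b).

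For the smoothness estimates: when $\mathbb{B}=\mathbb{C}$ (cases (2), (4), (5)) it suffices to prove the gradient bound \eqref{grad}. One differentiates under the integral sign in $t$ and applies \eqref{EST2} in place of \eqref{EST1}; the extra factor $\z^{-1/2}$ (resp.\ the drop $y^{2\eps-e_j}$) appearing in \eqref{EST2} produces, after Lemma~\ref{lem:genLAI} with $u=1$, exactly the desired $|x-y|^{-1}$ gain. (The Riesz case needs the commutation $\partial_{x_j}\delta_x^n = \delta_x^{n+e_j}-x_j\delta_x^n$ already noted in the proof of Lemma~\ref{lem:EST}, and differentiation in $y_j$ under $\int_0^\infty(\cdot)t^{|n|/2-1}dt$ is justified by the same pointwise bounds.) When $\mathbb{B}\neq\mathbb{C}$ (cases (1) and (3)) the difference conditions \eqref{sm1}, \eqref{sm2} are reduced to gradient estimates via the mean value theorem along the segment joining $x$ to $x'$ (resp.\ $y$ to $y'$): writing $K(x,y)-K(x',y)$ as an integral of $\nabla_x K$ over that segment, one uses Lemma~\ref{lem:theta} to control $q_{\pm}(\cdot,y,s)$ uniformly along the segment by $q_{\pm}(x,y,s)$ (the hypothesis $|x-y|>2|x-x'|$ keeps every intermediate point at distance $>\tfrac12|x-y|$ from $y$), then the $\mathbb{B}$-valued gradient bound — obtained from \eqref{EST2} plus Lemma~\ref{lem:genLAI} with $u=1$ — together with Lemma~\ref{lem:double} to pass from the ball centred at the intermediate point to $B(x,|x-y|)$.

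\textbf{Main obstacle.} The routine part is the exponent arithmetic; the genuinely delicate point is keeping the reduction uniform in $s\in[-1,1]^d$ when passing from the difference quotient to the gradient in the vector-valued cases. Lemma~\ref{lem:theta} gives comparability of $q_{\pm}$ at the endpoints, but one must check that the full $\mathbb{B}$-norm (an $L^\infty$ or $L^2$ norm in $t$, i.e.\ an integral in $\z$) of the gradient at an intermediate point $\xi$ on the segment is dominated by the same quantity with $x$ in place of $\xi$ — that is, that the bounds of Lemma~\ref{lem:genLAI} are stable under this replacement. This works because the right-hand side of Lemma~\ref{lem:genLAI} depends on $\xi$ only through $|\xi-y|$ and $\mu_\alpha(B(\xi,|\xi-y|))$, both comparable to their $x$-counterparts on the relevant set by Lemma~\ref{lem:double} and the doubling property; so the argument closes, but this is where care is needed rather than in any single computation.
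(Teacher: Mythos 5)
Your proposal is correct and follows essentially the same route as the paper: decomposition \eqref{G}, the pointwise bounds \eqref{EST1}--\eqref{EST2}, Lemma~\ref{lem:genLAI} with the same choices of $(u,p,W)$ in each case, reduction to \eqref{grad} for the scalar kernels, and the mean value theorem plus Lemmas~\ref{lem:theta} and \ref{lem:double} for the vector-valued ones. The only point worth sharpening is the one you flag yourself: since the intermediate point $\theta$ depends on $t$, the paper first majorizes the integrand pointwise in $t$ by the corresponding expression at the fixed point $x\vee x'$ (using $\theta\le x\vee x'$ for the polynomial factors and Lemma~\ref{lem:theta} twice, with $z=\theta$ and $z=x\vee x'$) before taking the $\mathbb{B}$-norm and applying Lemma~\ref{lem:genLAI} and Lemma~\ref{lem:double} at $x\vee x'$, rather than comparing norms taken at a moving point.
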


The Laguerre-Poisson semigroup is given by the Laguerre-Poisson kernel $P_t^{\alpha}(x,y)$, which
is linked to the heat kernel by the subordination formula,
$$
P_t^{\alpha}(x,y) = \int_0^{\infty} G^{\alpha}_{t^2\slash (4u)}(x,y) \; \frac{e^{-u}du}{\sqrt{\pi u}}.
$$
Our technique, presented in a moment in the proof of Theorem \ref{thm:std}, works perfectly
also for kernels emerging from $P_t^{\alpha}(x,y)$, and only slightly more effort is needed
(see for instance \cite[Section 4.3]{Sz}). In particular, it can be proved that the kernels in
(1) and (4) with $G_t^{\alpha}(x,y)$ replaced by $P_t^{\alpha}(x,y)$ satisfy the standard estimates.
The same is true about the kernel in (3) if $\mathbb{B}$ corresponding to 
$\{ \partial_t^m \delta_x^n P_t^{\alpha}(x,y)\}$ is chosen as $L^2(t^{2|n|+2m-1}dt)$, and about 
the kernel in (5) if we replace $2d + 2|\a|$ in \eqref{assum} by $\sqrt{ 2d + 2|\a| }$.
We leave details to interested readers.

The remaining part of this section is devoted to the proof of Theorem \ref{thm:std}.
In the proof we tacitly assume that passing
with the differentiation in $t$, $x_j$ or $y_j$ under integrals against $\1$, $dt$ or $d\nu(t)$
is legitimate. This is indeed always the case, as can be easily justified with the aid of the estimates
obtained in Lemma \ref{lem:EST} and in the proof of Theorem \ref{thm:std}; 
see \cite[Section 5]{NS1} and \cite[Section 4]{Sz}, where the details are given in the contexts 
of Riesz transforms and $g$-functions, respectively.

\begin{proof}[Proof of Theorem \ref{thm:std}; the case of $\mathcal{G}^{\alpha}(x,y)$]
In view of \eqref{G}, the growth condition for $\mathcal{G}^{\alpha}(x,y)$ is a direct consequence 
of Lemma \ref{lem:genLAI} (specified to $u=0$, $p=\infty$, $W=C=1$, $\vt = \vr = 0$).

To prove the smoothness estimates it is enough, by symmetry reasons, to show \eqref{sm1}.
By the Mean Value Theorem
$$
\big| G_t^{\a}(x,y) - G_t^{\a}(x',y) \big| 
\le 
|x-x'| \big| \nabla_{\!x} G_t^{\a}(x,y) \big|_{x=\t} \big|,
$$
where $\t$ is a convex combination of $x$, $x'$ that depends also on $t$. Thus it suffices to verify that
$$
\Big\| \big | \nabla_{\!x} G_t^{\a}(x,y) \big|_{x=\t} \big| \Big\|_{L^{\infty} (dt) }
\lesssim
\frac{1}{ |x-y| \mu_{\alpha}(B(x,|x-y|))},\qquad |x-y|>2|x-x'|.
$$
Observe that $\t \le {x \vee x'}$, $|x-\t| \le |x-x'|$ and $|x-{x \vee x'}| \le |x-x'|$. 
Applying \eqref{EST2} of Lemma \ref{lem:EST} (taken with $n=(0,\ldots,0)$ 
and $m=0$) and Lemma \ref{lem:theta} (first with $z=\t$ and then with $z={x \vee x'}$) we obtain
\begin{align*}
& \big| \nabla_{\!x} G_t^{\a}(x,y) \big|_{x=\t} \big|\\
& \quad \lesssim
	\sum_{\eps \in \{0,1\}^{d}} (1-\zeta^2)^{d+|\alpha|+2|\eps|} y^{2\eps} 
	\sum_{\eta \in \{0,1,2\}^d} ({x \vee x'})^{2\eps-\eta\eps} 
	\zeta^{-d-|\a|-2|\eps|+|\eta\eps|\slash 2 -1 \slash 2}\\
& \quad \qquad \times
	\int \big( \e\eee \big)^{1\slash 64} \, \1 \\
& \qquad +
	\sum_{\eps \in \{0,1\}^{d}} (1-\zeta^2)^{d+|\alpha|+2|\eps|} 
	\sum_{j=1}^{d} \chi_{ \{ \eps_j = 1 \} }y^{2\eps-e_j} 
	\sum_{\eta \in \{0,1,2\}^d} ({x \vee x'})^{2\eps-\eta\eps} 
	\zeta^{-d-|\a|-2|\eps|+|\eta\eps|\slash 2}\\
& \quad \qquad \times
	\int \big( \e\eee \big)^{1\slash 64} \, \1, 
\end{align*}
provided that $|x-y|>2|x-x'|$. Now the conclusion follows with the aid of Lemma \ref{lem:genLAI} (applied
with $u=1$, $p=\infty$, $W=1$, $C=1\slash 64$, $\vt = \eta\eps$ and either $\vr = 0$ or $\vr = e_j$)
and Lemma \ref{lem:double} specified to $z={x \vee x'}$.
\end{proof}

\begin{proof}[Proof of Theorem \ref{thm:std}; the case of ${R}_n^{\alpha}(x,y)$]
The growth estimate \eqref{gr} follows immediately from \eqref{EST1} of Lemma \ref{lem:EST} taken 
with $m=0$ and Lemma \ref{lem:genLAI} (applied with $u=0$, $p=1$, 
$W=|n| \slash 2$, $C=1\slash 2$, $\vt = \eta\eps$ and $\vr = 0$). 

To prove the gradient condition \eqref{grad}, it suffices to check that
$$
\Big\| \big| \nabla_{\!x,y} \delta_{x}^{n} G_t^{\alpha}(x,y) \big| \Big\|_{L^1 (t^{|n|\slash 2 -1} dt)}
\lesssim
\frac{1}{|x-y|\mu_{\alpha}(B(x,|x-y|))}, \qquad x\ne y.
$$
This estimate, however, follows readily by combining \eqref{EST2} of Lemma \ref{lem:EST}
(specified to $m=0$) with Lemma \ref{lem:genLAI} (taken with $u=1$, $p=1$, $W=|n| \slash 2$, 
$C=1\slash 4$, $\vt = \eta\eps$ and either $\vr = 0$ or $\vr = e_j$). 
\end{proof}

\begin{proof}[Proof of Theorem \ref{thm:std}; the case of $\mathcal{H}_{n,m}^{\alpha}(x,y)$]
The growth condition follows by using \eqref{EST1} of Lemma \ref{lem:EST}
and then Lemma \ref{lem:genLAI} (specified to $u=0$, $p=2$, $W=|n| + 2m$, $C=1\slash 2$, 
$\vt = \eta\eps$, $\vr = 0$).

Next, we verify the smoothness bound \eqref{sm1}. 
Proving the other smoothness estimate relies on essentially the same arguments and is left to the reader.

By the Mean Value Theorem it suffices to show that
$$
\Big\| \big| \nabla_{\!x} \partial_{t}^{m} \delta_{x}^{n} G_t^{\alpha}(x,y)\big|_{x=\t} 
\big| \Big\|_{L^2(t^{|n| +2m - 1} dt)}
\lesssim
\frac{1}{|x-y|\mu_{\alpha}(B(x,|x-y|))}, \qquad |x - y|>2|x-x'|,
$$
where $\t$ is a convex combination of $x$ and $x'$ that depends also on $t$. 
Using \eqref{EST2} of Lemma \ref{lem:EST}, the inequalities 
$\t \le {x \vee x'}$, $|x-\t| \le |x-x'|$, $|x-{x \vee x'}| \le |x-x'|$ and 
Lemma \ref{lem:theta} twice (with $z=\t$ and $z={x \vee x'}$) we get
\begin{align*}
& \big| \nabla_{\!x} \partial_{t}^{m} \delta_{x}^{n} G_t^{\alpha}(x,y)\big|_{x=\t} \big| \\
& \quad \lesssim
\sum_{\eps \in \{0,1\}^{d}} (1-\zeta^2)^{d+|\alpha|+2|\eps|} y^{2\eps} 
	\sum_{\eta \in \{0,1,2\}^d} ({x \vee x'})^{2\eps-\eta\eps} 
	\zeta^{-d-|\a|-2|\eps|-m - |n| \slash 2 +|\eta\eps|\slash 2 -1 \slash 2}\\
& \quad \qquad \times
	\int \big( \e\eee \big)^{1\slash 64} \, \1 \\
& \qquad +
	\sum_{\eps \in \{0,1\}^{d}} (1-\zeta^2)^{d+|\alpha|+2|\eps|} 
	\sum_{j=1}^{d} \chi_{ \{ \eps_j = 1 \} } y^{2\eps-e_j} 
	\sum_{\eta \in \{0,1,2\}^d} ({x \vee x'})^{2\eps-\eta\eps} 
	\zeta^{-d-|\a|-2|\eps| - m - |n| \slash 2 +|\eta\eps|\slash 2}\\
& \quad \qquad \times
	\int \big( \e\eee \big)^{1\slash 64} \, \1, 
\end{align*}
provided that $|x-y|>2|x-x'|$.
This, together with Lemma \ref{lem:genLAI} (specified to $u=1$, $p=2$, $W=|n| + 2m$, $C=1\slash 64$,
$\vt = \eta\eps$ and either $\vr = 0$ or $\vr = e_j$) and Lemma \ref{lem:double} 
(applied with $z={x \vee x'}$), produces the desired bound. 
\end{proof}

\begin{proof}[Proof of Theorem \ref{thm:std}; the case of ${K}_{\psi}^{\alpha}(x,y)$]
The growth condition is a straightforward consequence of \eqref{EST1} of Lemma \ref{lem:EST}
(taken with $n=(0,\ldots,0)$ and $m=1$), the fact that $\psi \in L^{\infty}(dt)$ and Lemma \ref{lem:genLAI} 
(specified to $u=0$, $p=1$, $W=1$, $C=1\slash 2$, $\vt = \eta\eps$, $\vr = 0$).

To prove the gradient condition, in view of the boundedness of $\psi$, it suffices to verify that
$$
\Big\| \big| \nabla_{\!x,y} \partial_{t} G_t^{\a}(x,y)  \big| \Big\|_{L^1(dt)}
\lesssim
\frac{1}{|x-y|\mu_{\alpha}(B(x,|x-y|))}, \qquad x \ne y.
$$
This, however, follows immediately from \eqref{EST2} of 
Lemma \ref{lem:EST} (with $n=(0,\ldots,0)$ and $m=1$) 
and Lemma \ref{lem:genLAI} (applied with $u=1$, $p=1$, 
$W=1$, $C=1\slash 4$, $\vt = \eta\eps$ and either $\vr = 0$ or $\vr = e_j$). 
\end{proof}

\begin{proof}[Proof of Theorem \ref{thm:std}; the case of ${K}_{\nu}^{\alpha}(x,y)$]
In order to show the growth bound it is enough, by the assumption \eqref{assum} concerning $\nu$, 
to check that
$$
e^{t(2d+2|\a|)}G_t^\a(x,y)
\lesssim 
\frac{1}{\mu_{\alpha}(B(x,|x-y|))}, \qquad x \ne y, \quad t>0.
$$
Taking into account \eqref{G}, an application of Lemma \ref{lem:comp} (b) 
(specified to $b=d + |\a| + 2|\eps|$, $c= 1\slash 4$, $A=\z^{-1}$) gives
\begin{align*}
e^{t(2d+2|\a|)}G_t^\a(x,y)
& \lesssim
\sum_{\eps \in \{ 0,1 \}^d} \z^{-d-|\a|-2|\eps|} (xy)^{2 \eps}
	\int \e\b \1 \\
& \lesssim
\sum_{\eps \in \{ 0,1 \}^d} (x+y)^{4 \eps} \int (q_+)^{-d - |\a| - 2|\eps|} \1.
\end{align*}
This, in view of Lemma \ref{lem:bridge} 
(applied with $\xi = 2\eps$, $\kappa = \mathbf{1} - \eps$), leads to the desired conclusion. 

To justify the gradient estimate \eqref{grad}, it suffices to verify that
$$
e^{t(2d+2|\a|)} \big| \nabla_{\!x,y}G_t^\a(x,y) \big|
\lesssim 
\frac{1}{|x-y|\mu_{\alpha}(B(x,|x-y|))}, \qquad x \ne y, \quad t>0.
$$
Proceeding in a similar way as in the case of the growth condition, using this time 
\eqref{EST2} of Lemma \ref{lem:EST} (applied with $n=(0,\ldots,0)$ and $m=0$) and Lemma \ref{lem:comp} (b) 
(specified to $c= 1\slash 16$, $A=\z^{-1}$ and either 
$b=d + |\a| + 2|\eps| - |\eta\eps| \slash 2 + 1 \slash 2$ or 
$b=d + |\a| + 2|\eps| - |\eta\eps| \slash 2$) we see that
\begin{align*}
& e^{t(2d+2|\a|)} \big| \nabla_{\!x,y}G_t^\a(x,y) \big| \\
& \, \lesssim
\sum_{\eps \in \{ 0,1 \}^d} 
	\sum_{\eta \in \{0,1,2\}^d}
	(x+y)^{2(2\eps-\eta\eps \slash 2)} 
	\int (q_+)^{-d - |\a| - |2\eps - \eta\eps \slash 2| - 1 \slash 2} \1 \\
& \quad + \!
	\sum_{\eps \in \{ 0,1 \}^d} \sum_{j=1}^{d} \chi_{\{\eps_j=1\}} \!
	\sum_{\eta \in \{0,1,2\}^d}
	(x+y)^{2(2\eps-\eta\eps \slash 2 -e_j \slash 2)} 
	\int (q_+)^{-d - |\a| - |2\eps - \eta\eps \slash 2 - e_j \slash 2| - 1 \slash 2} \1.
\end{align*}
Finally, in view of Lemma \ref{lem:bridge} (taken with $\xi = 2\eps - \eta\eps \slash 2$, $\kappa =
\mathbf{1} - \eps + \eta\eps \slash 2$ and $\xi = 2\eps - \eta\eps \slash 2 - e_j \slash 2$, $\kappa =
\mathbf{1} - \eps + \eta\eps \slash 2 +  e_j \slash 2$), we arrive at the required bound. 
\end{proof}

The proof of Theorem \ref{thm:std} is complete.

\section{Conclusions} \label{sec:CZ}

Let $\mathbb{B}$ be a Banach space and
suppose that $T$ is a linear operator assigning to each $f\in L^2(d\mu_{\alpha})$
a strongly measurable $\mathbb{B}$-valued function $Tf$ on $\R$. Then $T$ is said to be a (vector-valued)
Calder\'on-Zygmund operator in the sense of the space $(\R,d\mu_{\alpha},|\cdot|)$ associated with
$\mathbb{B}$ if
\begin{itemize}
\item[(A)] $T$ is bounded from $L^2(d\mu_{\alpha})$ to $L^2_{\mathbb{B}}(d\mu_{\alpha})$,
\item[(B)] there exists a standard $\mathbb{B}$-valued kernel $K(x,y)$ such that
$$
Tf(x) = \int_{\R} K(x,y) f(y)\, d\mu_{\alpha}(y), \qquad \textrm{a.e.}\; x \notin \support f,
$$
for every $f \in L_c^{\infty}(\R)$,
where $L_c^{\infty}(\R)$ is the subspace of $L^{\infty}(\R)$ of bounded measurable functions
with compact supports.
\end{itemize}
Here integration of $\mathbb{B}$-valued functions is understood in Bochner's sense, and
$L^2_{\mathbb{B}}(d\mu_{\alpha})$ is the Bochner-Lebesgue space of all $\mathbb{B}$-valued 
$d\mu_{\alpha}$-square integrable functions on $\R$.

It is well known that a large part of the classical theory of Calder\'on-Zygmund operators remains valid,
with appropriate adjustments, when the underlying space is of homogeneous type and the associated kernels
are vector-valued, see for instance \cite[p.\,649]{NS1} and references given there. 
In particular, if $T$ is a Calder\'on-Zygmund
operator in the sense of $(\R,d\mu_{\alpha},|\cdot|)$ associated with a Banach space $\mathbb{B}$,
then its mapping properties in weighted $L^p$ spaces follow from the general theory; 
see \cite[Section 2]{BCN}.

Let 
$$
T_t^{\alpha}f(x) = \int_{\R} G_t^{\alpha}(x,y)f(y)\, d\mu_{\alpha}(y), \qquad t>0, \quad x\in \R.
$$
For $\alpha \in (-1,\infty)^d$ consider the following operators defined initially in $L^2(d\mu_{\alpha})$.
\begin{itemize}
\item[(1)] The Laguerre heat semigroup maximal operator
$$
T_{*}^{\alpha}f = \big\| T_t^{\alpha}f\big\|_{L^{\infty}(dt)}.
$$
\item[(2)] Riesz-Laguerre transforms of order $|n|>0$
$$
R_n^{\alpha}f = \sum_{k \in \N^d} \big( 4|k| + 2|\alpha| + 2d\big)^{-|n|\slash 2}
	\langle f , \ell_k^{\alpha}\rangle_{d\mu_{\alpha}}\, \delta^n \ell_k^{\alpha},
$$
where $n \in \N^d$
and $\langle f , \ell_k^{\alpha}\rangle_{d\mu_{\alpha}}$ are the Fourier-Laguerre coefficients of $f$.
\item[(3)] Littlewood-Paley-Stein type mixed square functions 
$$
g^{\alpha}_{n,m}(f) = \big\| \partial_t^m \delta^n T_t^{\alpha}f \big\|_{L^2(t^{|n|+2m-1}dt)},
$$
where $n \in \N^d$, $m \in \N$, $|n|+m>0$.
\item[(4)] Multipliers of Laplace transform type
$$
M^{\alpha}_{\mathfrak{m}} f = \sum_{k \in \N^d} \mathfrak{m}(4|k|+2|\alpha|+2d) 
		\langle f , \ell_k^{\alpha}\rangle_{d\mu_{\alpha}}\,  \ell_k^{\alpha},
$$
where $\mathfrak{m}(z) = z\int_0^{\infty} e^{-tz} \psi(t)\, dt$ with $\psi \in L^{\infty}(dt)$.
\item[(5)] Multipliers of Laplace-Stieltjes transform type
$$
M^{\alpha}_{\mathfrak{m}} f = \sum_{k \in \N^d} \mathfrak{m}(4|k|+2|\alpha|+2d) 
		\langle f , \ell_k^{\alpha}\rangle_{d\mu_{\alpha}}\,  \ell_k^{\alpha},
$$
where $\mathfrak{m}(z) = \int_0^{\infty} e^{-tz} \, d\nu (t)$ with 
$\nu$ being a signed or complex Borel measure on $(0,\infty)$, 
with its total variation $|\nu|$ satisfying 
$$
\int_{0}^{\infty} e^{-t(2d + 2|\a| )} \, d|\nu|(t) < \infty.
$$
\end{itemize}
We remark that the formulas defining $T^{\alpha}_{*}f$ and $g_{n,m}^{\alpha}(f)$ are valid
(the integral defining $T_t^{\alpha}f(x)$ converges and produces a smooth function of 
$(x,t)\in \R\times\mathbb{R}_{+}$) 
for general functions $f$ from weighted $L^p$ spaces and
Muckenhoupt weights; see \cite[p.\,648]{NS1} and \cite[Section 2]{Sz} for the relevant arguments.

As a consequence of Theorem \ref{thm:std} we get the following result.
\begin{thm} \label{thm:main}
Let $\alpha \in (-1,\infty)^d$. The Riesz-Laguerre transforms and the multipliers of Laplace
and Laplace-Stieltjes
transforms type are scalar-valued Calder\'on-Zygmund operators in the sense of 
the space $(\R,d\mu_{\alpha},|\cdot|)$.
Furthermore, the Laguerre heat semigroup maximal operator and the mixed square functions can be viewed as
vector-valued Calder\'on-Zygmund operators in the sense of $(\R,d\mu_{\alpha},|\cdot|)$ associated
with Banach spaces $\mathbb{B}=C_0$ and $\mathbb{B}=L^2(t^{|n|+2m-1}dt)$, respectively, where
$C_0$ is a separable subspace of $L^{\infty}(dt)$ consisting of all continuous functions $f$ on
$\mathbb{R}_{+}$ which have finite limits as $t\to 0^{+}$ and vanish as $t\to \infty$.
\end{thm}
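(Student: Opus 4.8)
The plan is to deduce Theorem \ref{thm:main} from Theorem \ref{thm:std} together with the general Calder\'on-Zygmund machinery recalled at the beginning of Section \ref{sec:CZ}. For each of the five operators I must verify the two conditions (A) and (B): namely $L^2(d\mu_{\alpha})$-boundedness (into the appropriate Bochner space when $\mathbb{B}\neq\mathbb{C}$), and the integral representation off the diagonal with a kernel that is exactly the standard kernel already estimated in Theorem \ref{thm:std}. Since Theorem \ref{thm:std} supplies (B)'s standard estimates, the remaining work splits into: first, identifying the kernel of each operator as the corresponding kernel (1)--(5) from Section \ref{sec:ker}; second, establishing (A).

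First I would treat $L^2$-boundedness. For the maximal operator $T_*^{\alpha}$ and the square functions $g_{n,m}^{\alpha}$, this is the spectral/semigroup theory of $\mathcal{L}_{\alpha}$: the heat semigroup $\{\exp(-t\mathcal{L}_{\alpha})\}$ is a symmetric diffusion semigroup, so the maximal operator is $L^2$-bounded (indeed $L^p$-bounded, $1<p<\infty$) by the general theory; the square functions are $L^2$-bounded by a standard Littlewood-Paley-Stein computation using the spectral decomposition $\mathcal{L}_{\alpha}\ell_k^{\alpha}=(4|k|+2|\alpha|+2d)\ell_k^{\alpha}$ and the identity $\int_0^{\infty}|\partial_t^m\delta^n e^{-t\lambda}\langle f,\ell_k^{\alpha}\rangle|^2 t^{|n|+2m-1}\,dt \simeq |\langle f,\ell_k^{\alpha}\rangle|^2$ after summing in $k$ (one checks that $\delta^n$ acts nicely on the $\ell_k^{\alpha}$ and that the resulting spectral multiplier is bounded). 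For the Riesz transforms $R_n^{\alpha}$ and the two classes of multipliers $M_{\mathfrak{m}}^{\alpha}$, $L^2$-boundedness is immediate from Plancherel: these operators act on the Fourier-Laguerre side as multiplication by a bounded sequence — for the Laplace type multiplier $\mathfrak{m}(z)=z\int_0^{\infty}e^{-tz}\psi(t)\,dt$ is bounded by $\|\psi\|_{\infty}$, for the Laplace-Stieltjes type $|\mathfrak{m}(z)|\le\int_0^{\infty}e^{-tz}\,d|\nu|(t)$ which is bounded in view of the assumption \eqref{assum} since $z\ge 2d+2|\alpha|$ on the spectrum, and for $R_n^{\alpha}$ the multiplier $(4|k|+2|\alpha|+2d)^{-|n|/2}$ is bounded and $\|\delta^n\ell_k^{\alpha}\|_{L^2(d\mu_{\alpha})}\simeq(4|k|+2|\alpha|+2d)^{|n|/2}\|\ell_k^{\alpha}\|_{L^2(d\mu_{\alpha})}$ by the intertwining/ladder relations for the Laguerre functions.

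Next I would check that each operator has the claimed kernel away from the diagonal. For $f\in L_c^{\infty}(\R)$ and $x\notin\supp f$ one writes $T_t^{\alpha}f(x)=\int G_t^{\alpha}(x,y)f(y)\,d\mu_{\alpha}(y)$, applies $\delta_x^n$, $\partial_t^m$, or integrates against $t^{|n|/2-1}dt/\Gamma(|n|/2)$, $\psi(t)dt$, or $d\nu(t)$, and in each case passes the operation under the integral sign; the justification is exactly the kind of domination the paper already flags (using Lemma \ref{lem:EST} and the estimates from the proof of Theorem \ref{thm:std}, which guarantee absolute convergence uniformly for $x$ in compact sets disjoint from $\supp f$). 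This produces precisely the kernels $\mathcal{G}^{\alpha}$, $R_n^{\alpha}$, $\mathcal{H}_{n,m}^{\alpha}$, $K_{\psi}^{\alpha}$, $K_{\nu}^{\alpha}$ of Section \ref{sec:ker}, valued in the stated Banach spaces; for the maximal operator and square functions one additionally notes that $t\mapsto T_t^{\alpha}f(x)$ is continuous with the right limiting behaviour, so it genuinely takes values in $C_0\subset L^{\infty}(dt)$ (using that $G_t^{\alpha}(x,y)\to 0$ as $t\to\infty$ for fixed $x\neq y$, and has a finite limit as $t\to 0^+$ when $x\neq y$). Theorem \ref{thm:std} then gives that these kernels are standard in the sense of $(\R,d\mu_{\alpha},|\cdot|)$, so (B) holds. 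Combining (A) and (B) yields the assertion; the mapping properties in weighted $L^p$ spaces are then automatic from the general Calder\'on-Zygmund theory on spaces of homogeneous type, as recalled in Section \ref{sec:CZ}.

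I expect the main obstacle to be purely expository rather than deep: the genuinely hard analytic content — the standard estimates for the kernels — has already been done in Theorem \ref{thm:std}, so what remains is (i) the $L^2$-boundedness of the square functions, which requires a short but careful spectral argument with the mixed operator $\partial_t^m\delta^n$, and (ii) a clean justification of differentiation under the integral sign and of the membership in $C_0$ for the maximal-operator kernel. Both are routine given Lemma \ref{lem:EST}, and in fact the paper points to \cite[Section 5]{NS1} and \cite[Section 2, Section 4]{Sz} for precisely these verifications in the already-treated range of $\alpha$, with the same arguments going through verbatim for all $\alpha\in(-1,\infty)^d$ once the kernel estimates are in hand.
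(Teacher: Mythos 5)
Your proposal is correct and follows essentially the same route as the paper: the standard estimates come from Theorem \ref{thm:std}, and what remains is to verify conditions (A) and (B), which the paper simply delegates to the arguments of \cite{NS1,Sz,Sz2} (valid verbatim for all $\alpha\in(-1,\infty)^d$ once the kernel estimates are in hand). The one point where the paper adds substance beyond citation is exactly the spot you flag as the main obstacle, namely the $L^2$-boundedness and kernel association for the mixed square functions $g^{\alpha}_{n,m}$ with general $n$ and $m$: your phrase ``one checks that $\delta^n$ acts nicely on the $\ell_k^{\alpha}$'' is made precise there by the decomposition of $\delta^n\ell_k^{\alpha}$ from \cite[Proposition 3.5]{NS1} (see also \cite[Section 4.2]{BCN}), which is the ingredient your sketch implicitly relies on.
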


\begin{proof}
The standard estimates are provided in all the cases by Theorem \ref{thm:std}.
Thus it suffices to verify $L^2$-boundedness and kernel associations (conditions (A) and (B) above).
This, however, was essentially done already 
in \cite{NS1,Sz,Sz2} since the arguments given there are actually
valid for all $\alpha \in (-1,\infty)^d$ provided that the same is true about the standard estimates.
To be precise, 
an exception here are the mixed square functions because the arguments from \cite{Sz} cover only some
special cases. Proving the desired properties in the general case requires in addition the decomposition
from \cite[Proposition 3.5]{NS1}; see \cite[Section 4.2]{BCN} where the relevant arguments were given in
the setting of continuous Bessel expansions.
\end{proof}

Denote by $A_p^{\alpha}$, $1\le p < \infty$, the Muckenhoupt classes of weights related to
$(\R,d\mu_{\alpha},|\cdot|)$ (for the definition, see for instance \cite[p.\,645]{NS1}).
\begin{cor} \label{cor:main}
Let $\alpha \in (-1,\infty)^d$. The Riesz-Laguerre transforms and the multipliers of Laplace
and Laplace-Stieltjes
types extend to bounded linear operators on $L^p(wd\mu_{\alpha})$, $w \in A_p^{\alpha}$,
$1<p<\infty$, and from $L^1(wd\mu_{\alpha})$ to weak $L^1(wd\mu_{\alpha})$, $w \in A_1^{\alpha}$.
Furthermore, the Laguerre heat semigroup maximal operator and the mixed square functions, 
viewed as scalar-valued
sublinear operators, are bounded on $L^p(wd\mu_{\alpha})$, $w \in A_p^{\alpha}$,
$1<p<\infty$, and from $L^1(wd\mu_{\alpha})$ to weak $L^1(wd\mu_{\alpha})$, $w \in A_1^{\alpha}$.
\end{cor}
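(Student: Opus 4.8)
The plan is to derive the weighted $L^p$ and weak-type $(1,1)$ bounds for the operators in Corollary \ref{cor:main} directly from Theorem \ref{thm:main} by invoking the Calder\'on--Zygmund theory on spaces of homogeneous type. First I would recall that, by Theorem \ref{thm:main}, each of the Riesz--Laguerre transforms and the two multiplier operators is a scalar-valued Calder\'on--Zygmund operator in the sense of $(\R,d\mu_{\alpha},|\cdot|)$, while the heat semigroup maximal operator and the mixed square functions are vector-valued Calder\'on--Zygmund operators associated with $\mathbb{B}=C_0$ and $\mathbb{B}=L^2(t^{|n|+2m-1}dt)$, respectively. Since $(\R,d\mu_{\alpha},|\cdot|)$ is a space of homogeneous type (as noted in Section \ref{sec:intro}), the general weighted theory applies: a (vector-valued) Calder\'on--Zygmund operator that is bounded on $L^2(d\mu_{\alpha})$ and has a standard kernel extends to a bounded operator on $L^p(wd\mu_{\alpha})$ for all $1<p<\infty$ and $w\in A_p^{\alpha}$, and is of weak type $(1,1)$ with respect to $wd\mu_{\alpha}$ for $w\in A_1^{\alpha}$. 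I would cite for this the references already mentioned in the excerpt, in particular \cite[Section 2]{BCN} and \cite[p.\,649]{NS1} and the literature indicated there.

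For the scalar operators in items (2), (4), (5) this finishes the matter immediately, since these \emph{are} genuine Calder\'on--Zygmund operators in the classical (scalar) sense and the stated $L^p(wd\mu_{\alpha})$ and weak-type bounds are exactly the conclusions of the weighted Calder\'on--Zygmund theorem. For the maximal operator $T_*^{\alpha}$ and the square functions $g_{n,m}^{\alpha}$, the second step is to pass from the vector-valued Calder\'on--Zygmund boundedness to the stated bounds for them viewed as scalar-valued sublinear operators. This is routine: $T_*^{\alpha}f(x)=\|T_t^{\alpha}f(x)\|_{L^{\infty}(dt)}$ and $g_{n,m}^{\alpha}(f)(x)=\|\partial_t^m\delta^n T_t^{\alpha}f(x)\|_{L^2(t^{|n|+2m-1}dt)}$ are pointwise norms of the corresponding $\mathbb{B}$-valued Calder\'on--Zygmund operators, so $\|T_*^{\alpha}f\|_{L^p(wd\mu_{\alpha})}$ and $\|g_{n,m}^{\alpha}(f)\|_{L^p(wd\mu_{\alpha})}$ coincide with the Bochner--Lebesgue norms $\|T_t^{\alpha}f\|_{L^p_{\mathbb{B}}(wd\mu_{\alpha})}$ and likewise for the weak-type estimate; the vector-valued weighted theory then gives the bounds. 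One only has to observe that the a priori definitions of $T_*^{\alpha}f$ and $g_{n,m}^{\alpha}(f)$ make sense for $f\in L^p(wd\mu_{\alpha})$, $w\in A_p^{\alpha}$, which was already recorded in the excerpt (see \cite[p.\,648]{NS1} and \cite[Section 2]{Sz}).

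The main obstacle, such as it is, is purely bookkeeping rather than analytic: one must make sure that the vector-valued Calder\'on--Zygmund machinery is being applied in a form that yields weighted $A_p^{\alpha}$ estimates (and not merely unweighted $L^p$), and that the $L^2(d\mu_{\alpha})\to L^2_{\mathbb{B}}(d\mu_{\alpha})$ boundedness and the kernel identification established in Theorem \ref{thm:main} are precisely the hypotheses (A) and (B) needed to enter that machinery. Both points are handled by the references cited above, so I would simply state the result, note that it is an immediate consequence of Theorem \ref{thm:main} together with the weighted Calder\'on--Zygmund theory on spaces of homogeneous type, and for the maximal operator and square functions add the one-line remark that passing from the $\mathbb{B}$-valued operator to its scalar sublinear avatar preserves all $L^p(wd\mu_{\alpha})$ and weak-type bounds. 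No further computation is required.
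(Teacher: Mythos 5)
Your proposal is correct and follows essentially the same route as the paper: the scalar cases are read off directly from Theorem \ref{thm:main} and the weighted Calder\'on--Zygmund theory on spaces of homogeneous type, while the maximal operator and square functions require only the standard additional step of identifying the scalar sublinear operator with the pointwise $\mathbb{B}$-norm of the vector-valued extension, which the paper likewise delegates to \cite[Theorem 2.1]{NS1} and \cite[Corollary 2.5]{Sz}.
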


\begin{proof}
The part concerning $R_n^{\alpha}$ and $M_{\mathfrak{m}}^{\alpha}$ is a direct consequence of Theorem
\ref{thm:main} and the general theory. The remaining part requires some additional, but standard arguments,
see the proof of \cite[Theorem 2.1]{NS1} and \cite[Corollary 2.5]{Sz}. We leave details to interested
readers.
\end{proof}

Finally, we remark that results parallel to Theorem \ref{thm:main} and Corollary \ref{cor:main}
are in force for the Poisson semigroup based analogues of $T_{*}^{\alpha}$, $g_{n,m}^{\alpha}$ and
$M_{\mathfrak{m}}^{\alpha}$, see the comment following the statement of Theorem \ref{thm:std}.
This follows by quite obvious adjustments of the arguments for the heat semigroup based objects
and hence the details are omitted.

\end{document}